\documentclass[11pt,reqno]{amsart}

\usepackage[T1]{fontenc}
\usepackage{lmodern}
\usepackage{microtype}
\usepackage{amsmath,amssymb,amsthm,mathtools}
\usepackage{booktabs}
\usepackage{array}
\usepackage{enumitem}
\usepackage{geometry}
\usepackage{placeins}
\usepackage{tikz}
\usetikzlibrary{positioning,calc,arrows.meta}
\usepackage[colorlinks=true,linkcolor=blue,citecolor=blue,urlcolor=blue]{hyperref}
\usepackage{aliascnt}
\usepackage[nameinlink,noabbrev]{cleveref}

\newtheorem{theorem}{Theorem}[section]
\newaliascnt{proposition}{theorem}
\newtheorem{proposition}[proposition]{Proposition}
\aliascntresetthe{proposition}
\newaliascnt{lemma}{theorem}
\newtheorem{lemma}[lemma]{Lemma}
\aliascntresetthe{lemma}
\newaliascnt{corollary}{theorem}
\newtheorem{corollary}[corollary]{Corollary}
\aliascntresetthe{corollary}

\crefname{theorem}{Theorem}{Theorems}
\Crefname{theorem}{Theorem}{Theorems}
\crefname{proposition}{Proposition}{Propositions}
\Crefname{proposition}{Proposition}{Propositions}
\crefname{lemma}{Lemma}{Lemmas}
\Crefname{lemma}{Lemma}{Lemmas}
\crefname{corollary}{Corollary}{Corollaries}
\Crefname{corollary}{Corollary}{Corollaries}
\crefname{section}{Section}{Sections}
\Crefname{section}{Section}{Sections}
\crefname{appendix}{Appendix}{Appendices}
\Crefname{appendix}{Appendix}{Appendices}
\crefname{equation}{Equation}{Equations}
\Crefname{equation}{Equation}{Equations}
\crefname{figure}{Figure}{Figures}
\Crefname{figure}{Figure}{Figures}
\crefname{table}{Table}{Tables}
\Crefname{table}{Table}{Tables}

\newcommand{\Pp}{\mathbb P}
\newcommand{\Ee}{\mathbb E}
\newcommand{\Rr}{\mathbb R}
\newcommand{\Zz}{\mathbb Z}
\newcommand{\one}{\mathbf 1}
\newcommand{\Bin}{\operatorname{Bin}}
\newcommand{\Bern}{\operatorname{Bernoulli}}
\newcommand{\PCS}{\operatorname{PCS}}
\newcommand{\argmin}{\operatorname*{arg\,min}}
\newcommand{\KL}{D}
\newcommand{\pc}{p_{\mathrm c}}
\newcommand{\taud}{\tau_\delta}
\newcommand{\Gam}{\Gamma_\delta}
\newcommand{\Cd}{C_\delta}
\newcommand{\Jn}{J_\delta}
\newcommand{\Kn}{K_\delta}
\newcommand{\ThetaPZ}{\Theta_{k,t,\delta}}

\title[Least-favorable location for binomial top-$t$ selection]{Least-Favorable Location for Binomial Top-$t$ Selection}
\author{Yinhao Wu}
\address{School of Mathematics, Shanghai University of Finance and Economics, Shanghai 200433, China}
\email{yinhaowu@stu.sufe.edu.cn}
\author{Pinyuen Chen\textsuperscript{*}}
\address{Department of Mathematics, Syracuse University, Syracuse, NY 13244, USA}
\email{pinchen@syr.edu}
\thanks{\textsuperscript{*}Corresponding author. E-mail address: pinchen@syr.edu}
\date{\today}
\subjclass[2020]{Primary 62F07; Secondary 60F10}
\keywords{ranking and selection, binomial populations, least-favorable configuration, strong large deviations}

\begin{document}

\begin{abstract}
Consider $k$ independent Bernoulli populations, each sampled $n$ times, and select the $t$ populations with the largest success counts, breaking ties uniformly. Classical monotonicity reduces the worst case over the preference zone with separation $\delta$ to the slippage family with levels $p$ and $p+\delta$, leaving only its absolute location $p\in[0,1-\delta]$ undetermined. A Gaussian approximation suggests the symmetric center $p_{\mathrm c}=(1-\delta)/2$, and the exact two-population problem is uniquely centered there for every $n\ge2$. For fixed $k,t$ and $\delta\in(0,1)$, we prove that exact eventual centering holds precisely when $k=2t$. When $k\ne2t$, the least-favorable location $p_{n,k,t}^*$ satisfies
\[
 p_{n,k,t}^*-p_{\mathrm c}
 =(k-2t)C_\delta n^{-1/2}e^{-n\Gamma_\delta}\{1+o(1)\},
\]
where $C_\delta$ and $\Gamma_\delta$ are explicit and positive. In either case, the least-favorable location is eventually unique. The proof writes incorrect selection as a union of pairwise misrankings and applies inclusion--exclusion, yielding a bipartite graph expansion. A single misranking has its exact maximum at the symmetric center and determines the central curvature; two-edge intersections sharing one population determine the central slope through their multiplicity imbalance; all remaining graphs have higher large-deviation rates.
\end{abstract}

\maketitle \enlargethispage{7pt}

\section{Introduction}

In the least-favorable-configuration (LFC) problem for fixed-sample binomial top-$t$ selection with uniform tie breaking, the classical reduction identifies a two-level slippage family but, except in special cases, does not determine the minimizing location along it. Specifically, consider $k$ independent Bernoulli populations, each sampled $n$ times, and select the $t$ populations with the largest success counts, using uniform randomization when ties cross the selection boundary. For ordered parameters $\theta_{[1]}\le\cdots\le\theta_{[k]}$, define the preference zone by
\[
 \Theta_{k,t,\delta}
 =\{\boldsymbol\theta\in[0,1]^k:
   \theta_{[k-t+1]}-\theta_{[k-t]}\ge\delta\}.
\]
Classical monotonicity~\cite{Bofinger1976} reduces the worst probability of correct selection (PCS) over this space to
\[
 \inf_{\boldsymbol\theta\in\Theta_{k,t,\delta}}
 \PCS_n(\boldsymbol\theta)
 =\min_{0\le p\le1-\delta}P_{n,k,t,\delta}(p),
\]
where $P_{n,k,t,\delta}(p)$ is the PCS when $k-t$ populations have success probability $p$ and the other $t$ have success probability $p+\delta$. \Cref{prop:full-reduction} also identifies the equality case: if the scalar minimum is attained uniquely at an interior point, every full-space LFC has exactly these two levels. The remaining problem is therefore to locate this scalar minimizer and prove its uniqueness. Whenever it is unique, denote it by $p_{n,k,t}^*$.

The Gaussian approximation singles out a natural candidate. For one low count $X\sim\Bin(n,p)$ and one high count $Y\sim\Bin(n,p+\delta)$, the mean gap is $n\delta$ and the variance of $Y-X$ is $nV_\delta(p)$, where
\[
 V_\delta(p)
 =p(1-p)+(p+\delta)(1-p-\delta)
 =\frac{1-\delta^2}{2}-2(p-p_{\mathrm c})^2,
 \qquad p_{\mathrm c}=\frac{1-\delta}{2}.
\]
The Gaussian signal-to-noise ratio is smallest at $p_{\mathrm c}$, making a low--high comparison hardest there. Sobel and Huyett~\cite{SobelHuyett1957} obtained this centered large-sample limit for select-one binomial selection. In the finite-sample select-one problem, Buzaianu and Chen~\cite{BuzaianuChen2005} used exact derivations to obtain the LFCs for certain binomial procedures and established the equal-low slippage form at a fixed upper success probability, leaving the common absolute level to be optimized. For two Bernoulli populations, Eaton and Gleser~\cite{EatonGleser1989} proved that the centered pair is least favorable under uniform random tie breaking whenever $n\ge\max\{4,\delta^{-1}\}$. More recently, Wang~\cite{Wang2023} proved that the LFC of a related exact binomial top-$t$ procedure with a control has two experimental levels for each fixed control success probability, and left its absolute location open when the control probability is unknown. Neither Zhang and Chen~\cite{ZhangChen2025} nor Yin, Buzaianu, Chen, and Hsu~\cite{YinBuzaianuChenHsu2026} was successful in obtaining LFC's for their binomial subset selection procedures. For the classical top-$t$ rule studied here, the finite-sample location has not been characterized theoretically when $k\ge3$.

Intuitively, symmetry divides the problem into two natural cases. Complementing all Bernoulli outcomes gives
\[
 P_{n,k,t,\delta}(p)=P_{n,k,k-t,\delta}(1-\delta-p).
\]
When $k=2t$, this is a symmetry of the same objective, so uniqueness of the global minimizer would force exact centering. When $k\ne2t$, complementation maps the objective to the distinct problem with the two group sizes exchanged and does not locate either minimizer. The symmetry therefore reduces the balanced problem to eventual uniqueness, while leaving the unbalanced location to be determined.

The main theorem confirms this distinction: for fixed $k,t,\delta$, there is $n_0=n_0(k,t,\delta)$ such that $P_{n,k,t,\delta}$ has the unique minimizer $p_{n,k,t}^*$ for $n\ge n_0$. If $k=2t$, then $p_{n,k,t}^*=p_{\mathrm c}$; if $k\ne2t$, then
\[
 p_{n,k,t}^*
 =p_{\mathrm c}+(k-2t)C_\delta n^{-1/2}e^{-n\Gamma_\delta}\{1+o(1)\}.
\]
Here $C_\delta$ and $\Gamma_\delta$ are explicit and positive. When $k\ne2t$, the leading coefficient is nonzero, so the exact optimizer is off center for every sufficiently large $n$: it approaches $p_{\mathrm c}$ from the right when $k>2t$ and from the left when $k<2t$. Its distance from the center is smaller than every algebraic power of $n^{-1}$; the PCS at $p_{\mathrm c}$ likewise differs from the worst-case PCS by an exponentially small amount.

Our proof applies inclusion--exclusion to the pairwise low--high misranking events, organizing their intersections as bipartite comparison graphs. The single-edge probability is exactly even about $p_{\mathrm c}$, so it supplies the leading curvature but no central slope. The first nonzero slope comes from the two orientations of a two-edge wedge. Their multiplicities differ by a factor proportional to $k-2t$, while all other graph terms have strictly larger rates. Put $Q_{n,k,t,\delta}(p)=1-P_{n,k,t,\delta}(p)$. After localization and uniqueness have been established, the unbalanced displacement satisfies
\begin{equation}
 p_{n,k,t}^*-p_{\mathrm c}
 \sim-\frac{Q_{n,k,t,\delta}'(p_{\mathrm c})}
 {Q_{n,k,t,\delta}''(p_{\mathrm c})}.
 \label{eq:intro-quotient}
\end{equation}
The denominator is the edge curvature, with large-deviation rate $J_\delta$; the numerator is the wedge slope, with rate $K_\delta>J_\delta$. Their quotient has order $(k-2t)n^{-1/2}e^{-n(K_\delta-J_\delta)}$, so $\Gamma_\delta=K_\delta-J_\delta$; the ratio of the two derivative prefactors gives $C_\delta$.

At the leading large-deviation scale, Glynn and Juneja~\cite{GlynnJuneja2004} obtained the same center in the equal-allocation Bernoulli indifference-zone model as part of their rate-based analysis of sampling allocation. In the present problem, that scale identifies the limiting center and PCS scale; determining the exact finite-sample location requires the exponentially smaller scale. Our use of inclusion--exclusion is inspired by Shi, Peng, and Tuffin~\cite{ShiPengTuffin2024}, who show that simultaneous incorrect comparisons omitted by pairwise expansions can affect refined PCS approximations and finite-budget allocations. Here shared-population wedges are exponentially smaller than the edge contribution to the PCS, yet determine the leading central slope because the edge slope vanishes exactly.

The paper is organized as follows. \Cref{sec:problem-main} gives the full-space reduction and states the main result. \Cref{sec:edge-wedge} derives $Q_n'(\pc)$ and $Q_n''(\pc)$ from the comparison graph, and \Cref{sec:proof} turns these local estimates into localization, uniqueness, and the optimizer expansion. \Cref{sec:numerical,sec:conclusion} give numerical illustrations and the even-sample centering conjecture. \Cref{app:finite-sample} proves the full-space reduction and exact single-edge centering; \Cref{app:motif-calculations,app:graph-separation} contain the sharp lattice asymptotics and the graph-rate and localization estimates.

\section{Problem and main result}\label{sec:problem-main}

\subsection{Model and classical reduction}

Let $1\le t<k$ and let $\boldsymbol\theta=(\theta_1,\ldots,\theta_k)\in[0,1]^k$. Population $i$ produces independent observations $B_{i1},\ldots,B_{in}\sim\Bern(\theta_i)$ and count $S_i=\sum_{m=1}^nB_{im}$. Independent variables $U_i\sim\mathrm{Unif}(0,1)$ are used to order the augmented scores $\widetilde S_i=(S_i,U_i)$ lexicographically. The rule selects the $t$ largest augmented scores.

Write $\theta_{[1]}\le\cdots\le\theta_{[k]}$ for the ordered parameters and define
\begin{equation*}
 \ThetaPZ
 =\left\{\boldsymbol\theta\in[0,1]^k:
 \theta_{[k-t+1]}-\theta_{[k-t]}\ge\delta\right\}.
\end{equation*}
For $\boldsymbol\theta\in\ThetaPZ$, the top $t$ populations are separated from the other $k-t$ populations by at least $\delta$ and therefore form a well-defined set. Let $\PCS_n(\boldsymbol\theta)$ denote the probability that the rule selects exactly that set. We use the following finite-sample form of the classical reduction.

\begin{proposition}[Full-space reduction and its interior equality case]\label{prop:full-reduction}
For $p\in[0,1-\delta]$, let $P_{n,k,t,\delta}(p)$ be the PCS for $k-t$ probabilities equal to $p$ and $t$ probabilities equal to $p+\delta$. For each $\boldsymbol\theta\in\ThetaPZ$, define
\[
 p(\boldsymbol\theta)=\theta_{[k-t+1]}-\delta\in[0,1-\delta].
\]
Then
\begin{equation}
 \PCS_n(\boldsymbol\theta)
 \ge P_{n,k,t,\delta}\bigl(p(\boldsymbol\theta)\bigr).
 \label{eq:pointwise-reduction}
\end{equation}
If $p(\boldsymbol\theta)\in(0,1-\delta)$, equality holds if and only if
\[
 \theta_{[1]}=\cdots=\theta_{[k-t]}=p(\boldsymbol\theta),
 \qquad
 \theta_{[k-t+1]}=\cdots=\theta_{[k]}
 =p(\boldsymbol\theta)+\delta.
\]
Consequently, for every $n,k,t$ and $\delta\in(0,1)$,
\begin{equation}
 \inf_{\boldsymbol\theta\in\ThetaPZ}\PCS_n(\boldsymbol\theta)
 =\min_{0\le p\le1-\delta}P_{n,k,t,\delta}(p).
 \label{eq:full-reduction}
\end{equation}
\end{proposition}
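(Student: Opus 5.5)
We prove \eqref{eq:pointwise-reduction} by a monotonicity (coupling) argument applied one coordinate at a time; the strict version then gives the equality case.

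\textbf{Step 1: strict coordinatewise monotonicity.} Let $G$ be the index set of the top $t$ populations and $L$ its complement. Fix the other coordinates and consider $\PCS_n$ as a function of a single $\theta_i$. For $i\in G$ it is nondecreasing in $\theta_i$, and for $i\in L$ it is nonincreasing. Indeed, on the event that all other augmented scores are fixed, correct selection is an up-set in $\widetilde S_i$ when $i\in G$ and a down-set when $i\in L$. The standard monotone coupling of Bernoulli samples shows that $S_i$ is stochastically increasing in $\theta_i$, and the claim follows by conditioning. For strictness, we use the derivative formula
\[
\frac{\partial}{\partial\theta_i}\Pp(\text{CS})=n\,\bigl[\Pp(\text{CS}\mid S_i'=s+1)-\Pp(\text{CS}\mid S_i'=s)\bigr]
\]
averaged over the binomial law of $S_i'\sim\Bin(n-1,\theta_i)$. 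Its summands are nonnegative (respectively nonpositive). For $\theta_i\in(0,1)$, or more generally whenever the others' laws give positive mass to a configuration in which raising $S_i$ by one changes the outcome, some summand is strictly nonzero. Here is a concrete instance for $i\in L$. Take all other low counts to be $0$ and all high counts other than one $j\in G$ to be $n$, and place $S_j$ at a level where the tie-break comparison with $S_i$ flips. This event has positive probability provided $\theta_j\in(0,1)$ for that $j$. This is exactly where the interior hypothesis $p(\boldsymbol\theta)\in(0,1-\delta)$ enters: it gives $0<\theta_{[k-t+1]}<1$ and $\theta_{[k-t]}\le p(\boldsymbol\theta)<1$. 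Taking $j$ to be the index attaining $\theta_{[k-t+1]}$ then makes all relevant binomials nondegenerate.

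\textbf{Step 2: the reduction.} Set $p=p(\boldsymbol\theta)$. Each low coordinate satisfies $\theta_i\le\theta_{[k-t]}\le\theta_{[k-t+1]}-\delta=p$. We raise the low coordinates to $p$ one at a time, then lower the high coordinates to $p+\delta$ one at a time. Every intermediate vector stays in $\ThetaPZ$ with the same partition $G,L$, because the minimal high value never drops below $p+\delta$. By Step 1, each move weakly decreases the PCS, and the endpoint is the slippage configuration, giving \eqref{eq:pointwise-reduction}. For \eqref{eq:full-reduction}, note that the slippage configurations lie in $\ThetaPZ$, so the infimum is at most the minimum. Continuity of $P_{n,k,t,\delta}$ (it is a polynomial in $p$) ensures that the minimum is attained.

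\textbf{Step 3: the equality case.} The ``if'' direction is trivial. For ``only if'', suppose some coordinate differs from its target. Perform that move first along the path. Strict monotonicity in Step 1 makes this move strictly decrease the PCS, because the move traverses an interval of $\theta_i$ values on which the derivative is strictly signed. That derivative is a polynomial in $\theta_i$ that is not identically zero, so it vanishes at only finitely many points. All subsequent moves are weak decreases, so the final value is strictly below $\PCS_n(\boldsymbol\theta)$, a contradiction.

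The main obstacle is Step 1's strictness when some coordinates sit at $0$ or $1$ and the tie-breaking is randomized. We will need to choose a witness configuration carefully so that moving $\theta_i$ changes the conditional PCS with positive probability. The plan is to use the nondegenerate population attaining $\theta_{[k-t+1]}$ as the competitor that ties with or overtakes $S_i$.
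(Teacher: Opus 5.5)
Your argument is correct, and it takes a different route from the paper. The paper couples all $k$ populations at once ($B_{im}(a)=\one\{W_{im}\le a\}$ with shared tie-breakers). This gives the pathwise inclusion $C_p\subseteq C_{\boldsymbol\theta}$ in a single step. Strictness then comes from one explicit positive-probability event in $C_{\boldsymbol\theta}\setminus C_p$. On that event every low count is $0$ and every high count is $n$, except that one low count jumps from $0$ to $n$ (or one high count drops from $n$ to $0$) between the two configurations. The tie-breakers are arranged so that the reduced selection fails.

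You instead move one coordinate at a time along a path inside $\ThetaPZ$. You get strictness from the identity $\frac{d}{d\theta}\Ee f(S)=n\,\Ee[f(S'+1)-f(S')]$, with $S\sim\Bin(n,\theta)$, $S'\sim\Bin(n-1,\theta)$ and $f(s)=\Pp(\text{CS}\mid S_i=s)$, together with a witness configuration. (Your display conditions on $S_i'$ rather than $S_i$, which is only a notational slip.) The paper's route is shorter and needs no path bookkeeping or derivatives. Yours yields the finer fact that $\PCS_n$ is strictly monotone in each coordinate separately on $\ThetaPZ$.

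One piece is missing as written. You give a concrete witness only for $i\in L$. Your closing plan, using the population attaining $\theta_{[k-t+1]}$ as the competitor, does not apply to a high coordinate $i\in G$. There the competitor must be a low population, and the lows may all be degenerate at $0$.

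The fix is the paper's kind of witness. Take all low counts $0$ and all other high counts $n$. Raising $S_i$ from $0$ to $1$ then raises the conditional probability of correct selection from $1/(k-t+1)$ to $1$. This configuration has positive probability, because every low parameter is at most $p<1$ and every high parameter is at least $p+\delta>0$. Also $\Pp(S_i'=0)>0$ for every $\theta_i'\in(p+\delta,\theta_i)$. So the derivative is strictly positive on that whole open interval.

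The same idea simplifies your low case: other lows at $0$, all highs at $n$, and $S_i$ moving from $n-1$ to $n$, so no nondegenerate competitor is needed. The interior hypothesis therefore serves only to make your particular choice of $j$ nondegenerate; it is not what produces strictness. With a witness valid at every interior point of each move, your finite-zeros argument for the polynomial derivative also becomes unnecessary.
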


Thus \Cref{prop:full-reduction} reduces the LFC problem to minimizing the scalar function $P_{n,k,t,\delta}$. Its equality case recovers the full-space minimizers whenever the scalar minimizer is interior and unique. The non-strict general-$t$ reduction is classical \cite{Bofinger1976}. For $t=1$, Sobel and Huyett~\cite{SobelHuyett1957} proved strict monotonicity in each best--inferior parameter gap. \Cref{app:full-reduction-proof} proves the general-$t$ strict inequality needed here.

\subsection{The two-level objective}

Fix $k\ge2$, $1\le t<k$, a sample size $n\ge1$, and a gap $\delta\in(0,1)$. For $p\in[0,1-\delta]$, write $q=p+\delta$ and let
\[
 X_1,\ldots,X_{k-t}\stackrel{\mathrm{iid}}{\sim}\Bin(n,p),
 \qquad
 Y_1,\ldots,Y_t\stackrel{\mathrm{iid}}{\sim}\Bin(n,q),
\]
with all variables independent. The $X_i$ are the low populations and the $Y_j$ are the high populations. Exactly $t$ populations are selected.

Uniform random tie breaking is represented by independent auxiliary variables
\[
 U_1,\ldots,U_{k-t},V_1,\ldots,V_t
 \stackrel{\mathrm{iid}}{\sim}\mathrm{Unif}(0,1),
\]
independent of the binomial counts. We order augmented scores
\[
 \widetilde X_i=(X_i,U_i),\qquad \widetilde Y_j=(Y_j,V_j)
\]
lexicographically. Selecting the $t$ largest augmented scores is equivalent to choosing uniformly among the admissible selections whenever a count tie straddles the selection boundary. A correct selection occurs precisely when
\[
 \max_{1\le i\le k-t}\widetilde X_i
 <
 \min_{1\le j\le t}\widetilde Y_j.
\]
Define
\begin{equation*}
 P_{n,k,t,\delta}(p)
 =\Pp\!\left(\max_i\widetilde X_i<\min_j\widetilde Y_j\right),
 \qquad
 Q_{n,k,t,\delta}(p)=1-P_{n,k,t,\delta}(p).
\end{equation*}
When $k,t$, and $\delta$ are fixed, we write $P_n=P_{n,k,t,\delta}$ and $Q_n=Q_{n,k,t,\delta}$. The minimizing set of $P_{n,k,t,\delta}$ is
\begin{equation*}
 \mathcal M_{n,k,t,\delta}
 =\argmin_{0\le u\le1-\delta}P_{n,k,t,\delta}(u).
\end{equation*}
Whenever this set is a singleton, denote its element by $p_{n,k,t}^*$; its dependence on $\delta$ is suppressed from the notation. For each count configuration, the conditional probability of correct selection is fixed by its tie pattern. Hence $P_{n,k,t,\delta}(p)$ is a finite sum of polynomial terms in $p$, so it is continuous on $[0,1-\delta]$ and $\mathcal M_{n,k,t,\delta}$ is nonempty.

\subsection{Main result}

Complementing successes and failures reverses the score order and gives the following exact identity.

\begin{proposition}[Exact reflection]\label{prop:reflection}
Let $T(p)=1-\delta-p$. Then, for every $n,k,t$ and $p\in[0,1-\delta]$,
\begin{equation*}
 P_{n,k,t,\delta}(p)=P_{n,k,k-t,\delta}(T(p)).
\end{equation*}
Consequently, when $k=2t$,
\begin{equation*}
 P_{n,k,t,\delta}(\pc+u)=P_{n,k,t,\delta}(\pc-u),
 \qquad \pc=\frac{1-\delta}{2},
\end{equation*}
whenever both arguments belong to $[0,1-\delta]$.
\end{proposition}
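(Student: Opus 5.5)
We couple the two configurations by complementing every Bernoulli observation. Take the probability space defining $P_{n,k,t,\delta}(p)$, with low counts $X_i\sim\Bin(n,p)$, high counts $Y_j\sim\Bin(n,q)$, $q=p+\delta$, and tie-breakers $U_i,V_j$. Set
\[
 X_i'=n-X_i,\qquad Y_j'=n-Y_j,\qquad U_i'=1-U_i,\qquad V_j'=1-V_j .
\]
Then $X_i'\sim\Bin(n,1-p)$ and $Y_j'\sim\Bin(n,1-q)$. The auxiliary variables remain i.i.d.\ uniform, and all the variables remain independent.

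The key observation is that the map $(s,u)\mapsto(n-s,1-u)$ reverses the lexicographic order on $\{0,\dots,n\}\times(0,1)$. Consequently the correct-selection event $\max_i\widetilde X_i<\min_j\widetilde Y_j$ is the same event as
\[
 \max_j \widetilde{Y}'_j<\min_i \widetilde{X}'_i,
 \qquad \widetilde{Y}'_j=(Y_j',V_j'),\quad \widetilde{X}'_i=(X_i',U_i').
\]
In the new description the $k-t$ populations $X'$ have success probability $1-p=(1-\delta-p)+\delta=T(p)+\delta$, so they form the high group. The $t$ populations $Y'$ have probability $1-q=T(p)$, so they form the low group. The displayed event is therefore exactly correct selection for the problem with $k-t$ high and $t$ low populations at level $T(p)$. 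Its probability is $P_{n,k,k-t,\delta}(T(p))$, and this proves the identity. We also check that $T$ maps $[0,1-\delta]$ onto itself.

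The only delicate point is the tie-breaking. Events such as $U_i=V_j$ have probability zero, so strict lexicographic order is almost surely total and the inequalities transform cleanly. Moreover $1-U$ is again uniform and independent, so the selection rule in the complemented model is the same uniform random rule.

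For $k=2t$ we have $k-t=t$, so the identity reads $P_{n,k,t,\delta}(p)=P_{n,k,t,\delta}(1-\delta-p)$. Substituting $p=\pc+u$ gives $1-\delta-p=\pc-u$, which is the stated symmetry whenever both arguments lie in $[0,1-\delta]$. There is no real obstacle here; the one step that needs care is the order reversal together with the almost-sure absence of ties in the uniforms.
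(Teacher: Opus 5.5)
Your proof is correct and matches the paper's argument: the paper also maps each augmented score $(z,u)$ to $(n-z,1-u)$. It notes that this reverses lexicographic order, sends $\Bin(n,p)$ low counts to $\Bin(n,T(p)+\delta)$ high counts and $\Bin(n,p+\delta)$ high counts to $\Bin(n,T(p))$ low counts, and reads off the balanced symmetry from $T(\pc+u)=\pc-u$. Your appeal to almost-sure distinctness of the uniforms is harmless but unnecessary, since the map reverses the strict lexicographic order pointwise.
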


\begin{proof}
Map every augmented score $(z,u)$ to $(n-z,1-u)$. This reverses lexicographic order. If $X\sim\Bin(n,p)$, then $n-X\sim\Bin(n,1-p)=\Bin(n,T(p)+\delta)$, so a complemented low score is a high score in the reflected model. Likewise, if $Y\sim\Bin(n,p+\delta)$, then $n-Y\sim\Bin(n,1-p-\delta)=\Bin(n,T(p))$, so a complemented high score is a low score. The correct-selection event is thereby mapped to its reflected counterpart with $t$ and $k-t$ interchanged. The balanced identity follows from $T(\pc+u)=\pc-u$.
\end{proof}

When $k=2t$, reflection makes $\pc$ stationary; global centering will follow from the eventual uniqueness proved below. To state the constants in the location expansion, put
\begin{equation*}
 \taud=\left(\frac{1-\delta}{1+\delta}\right)^{1/3},
\end{equation*}
and define
\begin{equation}
 \Gam=\log\frac{4(1-\taud+\taud^2)}{(1+\taud)^2},
 \qquad
 \Cd=\frac{\taud^2}{3\sqrt{3\pi}(1+\taud)^2(1-\taud+\taud^2)}.
 \label{eq:C-closed}
\end{equation}
Both constants are positive. Here $\taud$ shortens the closed forms. The exponent gap $\Gam$ and the prefactor $\Cd$ arise, respectively, from the wedge--edge rate difference and the ratio of the wedge slope to the edge curvature; see \Cref{sec:edge-wedge,sec:proof}.

\begin{theorem}[Least-favorable location for top-$t$ selection]\label{thm:global-top-t}
Fix $k\ge2$, $1\le t<k$, and $\delta\in(0,1)$. There is $n_0=n_0(k,t,\delta)$ such that $\mathcal M_{n,k,t,\delta}$ is a singleton for all $n\ge n_0$. If $k=2t$, then
\begin{equation}
 p_{n,k,t}^*=\frac{1-\delta}{2}
 \qquad(n\ge n_0).
 \label{eq:balanced-select-t}
\end{equation}
If $k\ne2t$, then
\begin{equation}
 p_{n,k,t}^*
 =\frac{1-\delta}{2}
 +(k-2t)\Cd n^{-1/2}e^{-n\Gam}\{1+o(1)\}.
 \label{eq:select-t-main}
\end{equation}
Thus the displacement is to the right when $k>2t$ and to the left when $k<2t$. Consequently, for every $M>0$,
\begin{equation}
 n^M|p_{n,k,t}^*-\pc|\longrightarrow0.
 \label{eq:beyond-all-orders}
\end{equation}
\end{theorem}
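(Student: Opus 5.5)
We minimize $Q_n=1-P_n$ instead, maximizing it, and proceed in three stages: localization, local strict concavity, and a one-step Newton expansion of the critical point. Write the incorrect-selection event as the union over pairs $(i,j)$ of the misranking events $E_{ij}=\{\widetilde X_i>\widetilde Y_j\}$. Inclusion--exclusion then expresses $Q_n$ as a signed sum over nonempty edge sets $G$ of the complete bipartite graph $K_{k-t,t}$, each term being $\Pp(\bigcap_{(i,j)\in G}E_{ij})$. Grouping by isomorphism type gives
\[
Q_n=t(k-t)\,e_n(p)-\Bigl[(k-t)\binom t2 w^{\mathrm{L}}_n(p)+t\binom{k-t}2 w^{\mathrm{H}}_n(p)\Bigr]+\cdots,
\]
where $e_n(p)=\Pp(\widetilde X>\widetilde Y)$ is the single-edge term. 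The two wedge terms are $w^{\mathrm L}_n=\Pp(\widetilde X>\widetilde Y_1,\widetilde X>\widetilde Y_2)$, with the low vertex shared, and $w^{\mathrm H}_n=\Pp(\widetilde X_1>\widetilde Y,\widetilde X_2>\widetilde Y)$, with the high vertex shared. Reflection (\Cref{prop:reflection}, applied at the level of single graphs) shows three facts. First, $e_n$ is even about $\pc$. Second, $w^{\mathrm L}_n(\pc+u)=w^{\mathrm H}_n(\pc-u)$. Third, the combined wedge slope at $\pc$ is therefore
\[
\bigl[(k-t)\tbinom t2-t\tbinom{k-t}2\bigr]\,w^{\mathrm L\prime}_n(\pc)=\tfrac{t(k-t)}2(2t-k)\,w^{\mathrm L\prime}_n(\pc),
\]
which explains the factor $k-2t$. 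The paper's single-edge result gives that $e_n$ has its exact maximum at $\pc$.

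\textbf{Step 1 (sharp local asymptotics).} By lattice strong large deviations (Bahadur--Rao type local CLT under exponential tilting), we would compute $e_n''(\pc)\sim -c_1 n^{1/2}e^{-nJ_\delta}$. The rate is $J_\delta=-\log\min_\lambda \Ee e^{\lambda(X-Y)}$ at $\pc$. The curvature picks up a factor $n$ from the $p$-dependence of the rate, which has a strict maximum at $\pc$ where $J_\delta$ is minimal; the rate of the edge term is smallest at the center, consistent with Gaussian intuition. Similarly, we would compute $w^{\mathrm L\prime}_n(\pc)\sim c_2 e^{-nK_\delta}$, with $K_\delta$ the rate of the two-dimensional event $\{X>Y_1,X>Y_2\}$, obtained by optimizing a common tilt on $X$. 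Evaluating these variational problems in closed form, with the substitution $\tau_\delta$ coming from the cube root in the optimal tilt, should yield $K_\delta-J_\delta=\Gam>0$ and $c_2/c_1$ matching $\Cd$. Every other graph $G$ has at least three edges or two disjoint edges, and we would show it has rate strictly larger than $K_\delta$ uniformly near $\pc$, together with its first three derivatives in $p$ (differentiating polynomial probabilities costs only polynomial factors in $n$). Two disjoint edges have rate $2J_\delta$, and we need $2J_\delta>K_\delta$. This follows because a shared vertex reduces the cost, and the wedge cost is strictly less than that of two independent edges.

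\textbf{Step 2 (localization and uniqueness).} Away from a shrinking neighbourhood $|p-\pc|\le\eta_n$, we would show $Q_n(p)<Q_n(\pc)$. On compact sets bounded away from $\pc$, this holds because the edge rate there exceeds $J_\delta$. At moderate distances, it follows from the even, strictly concave edge term dominating the $O(e^{-nK_\delta})$ remainder; the first-order comparison $e_n(\pc)-e_n(p)\gtrsim n\,(p-\pc)^2e_n(\pc)$ beats the remainder once $|p-\pc|\gg e^{-n\Gam/2}$. Inside the neighbourhood, $Q_n''<0$, because the edge curvature of order $n^{1/2}e^{-nJ_\delta}$ dominates third-derivative and remainder contributions. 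Hence $Q_n$ is strictly concave there and has a unique critical point $p^*$, which gives the singleton claim. For $k=2t$, \Cref{prop:reflection} forces the unique maximizer to be $\pc$, proving \eqref{eq:balanced-select-t}.

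\textbf{Step 3 (expansion).} For $k\ne2t$, the mean value theorem gives $p^*-\pc=-Q_n'(\pc)/Q_n''(\xi)$, and $Q_n''(\xi)\sim Q_n''(\pc)$ by a third-derivative bound on the tiny interval. Substituting Step 1 gives \eqref{eq:select-t-main}, and \eqref{eq:beyond-all-orders} follows immediately. The main obstacle is Step 1: obtaining sharp, uniform-in-$p$ lattice asymptotics with derivatives in $p$, including uniform tie-breaking terms, and proving the strict rate separation of all non-wedge graphs. We would attack the separation by convex duality, comparing the Legendre transforms of each graph's count vector directly.
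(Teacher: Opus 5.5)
Your proposal follows essentially the paper's proof: bipartite inclusion--exclusion, exact evenness of the edge term together with the $N_H-N_L\propto k-2t$ wedge imbalance for $Q_n'(\pc)$, sharp lattice asymptotics for the edge curvature and wedge slope, rate separation of all other graphs, and then localization, strict concavity and the mean-value quotient; your value-comparison localization into an exponentially small window, your third-derivative control of $Q_n''(\xi)$, and your bound $K_\delta\le 2D(z_{\mathrm e}\|p)+D(z_{\mathrm e}\|q)<2J_\delta$ (obtained by inserting the edge optimizer into the wedge program, simpler than the paper's factorization) are harmless variants. Two small fixes are needed: at moderate distances the non-edge terms are not $O(e^{-nK_\delta})$, because the wedge rates tilt by $\pm\kappa_\delta(p-\pc)$, so you should compare via the local edge-dominance gap $\mathcal I_p(G)\ge I_1(p)+\gamma$ and the exact monotonicity of $a_n$ in $|p-\pc|$ (your quadratic lower bound on $a_n(\pc)-a_n(p)$ only holds for $|p-\pc|\lesssim n^{-1/2}$); and the leading constant is $c_2/(2c_1)=\Cd$, not $c_2/c_1$.
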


For two populations, the location can be determined for every sample size.

\begin{corollary}[Exact two-population location]\label{cor:two-population-location}
For every $\delta\in(0,1)$,
\begin{equation*}
 \mathcal M_{n,2,1,\delta}
 =
 \begin{cases}
 [0,1-\delta], & n=1,\\
 \{\pc\}, & n\ge2.
 \end{cases}
\end{equation*}
For every $n\ge2$, $\boldsymbol\theta$ minimizes $\PCS_n(\boldsymbol\theta)$ over $\Theta_{2,1,\delta}$ if and only if $(\theta_{[1]},\theta_{[2]})=(\pc,\pc+\delta)$. Thus one may take $n_0(2,1,\delta)=2$ in \Cref{thm:global-top-t}.
\end{corollary}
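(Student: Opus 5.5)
We prove the corollary by computing $Q_{n,2,1,\delta}$ for $k=2$, $t=1$ explicitly and analyzing it as a function of $p$. With one low count $X\sim\Bin(n,p)$ and one high count $Y\sim\Bin(n,q)$, $q=p+\delta$, uniform tie breaking gives
\[
 Q_n(p)=\Pp(X>Y)+\tfrac12\Pp(X=Y).
\]
For $n=1$ we enumerate the four outcomes: $Q_1=p(1-q)+\tfrac12\{pq+(1-p)(1-q)\}=\tfrac12(1-q+p)=\tfrac12(1-\delta)$. This is constant in $p$, so $\mathcal M_{1,2,1,\delta}=[0,1-\delta]$.

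For $n\ge2$, \Cref{prop:reflection} with $k=2t$ already shows that $P_n$ is even about $\pc$. It therefore suffices to show that $Q_n$ is strictly increasing on $[0,\pc]$, which makes $\pc$ the unique maximizer of $Q_n$. To do this we differentiate through the Bernoulli structure. Write $D=Y-X=\sum_m (B'_m-B_m)$ with i.i.d.\ increments. Shifting $p$ and $q$ together by $dp$ perturbs one increment at a time. Using $\partial_p\Pp(\text{event})=n\,\Pp(\text{event}\mid B_1=1)-n\,\Pp(\text{event}\mid B_1=0)$, and the analogous formula for $q$, we obtain a closed form
\[
 Q_n'(p)=\tfrac n2\bigl[\Pp(X_{n-1}=Y_{n-1})\ \text{terms}\bigr],
\]
expressed through the point probabilities of $D_{n-1}=Y_{n-1}-X_{n-1}$ at $0$ and $\pm1$. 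Concretely, I expect
\[
 Q_n'(p)=\tfrac n2\bigl\{\Pp(D_{n-1}=0)+\Pp(D_{n-1}=-1)\bigr\}-\tfrac n2\bigl\{\Pp(D_{n-1}=0)+\Pp(D_{n-1}=1)\bigr\},
\]
up to the exact bookkeeping of the tie weights. This reduces to $\tfrac n2\{\Pp(D_{n-1}=-1)-\Pp(D_{n-1}=1)\}$ plus corrections. The sign of $Q_n'$ is then a comparison of the form $\Pp(X-Y=j)$ versus $\Pp(Y-X=j)$ for $(n-1)$-sample binomials.

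The key step, and the main obstacle, is to prove the strict sign $Q_n'(p)>0$ for $p<\pc$ when $n\ge2$. The plan is to write the relevant difference as a single sum $\sum_x b_{n-1}(x;p)\,b_{n-1}(x\pm1;q)$ and pair terms via the complement map $x\mapsto n-1-x$. This turns the difference into a sum of terms of the form
\[
 \bigl[(pq)^{a}\{(1-p)(1-q)\}^{b}-(pq)^{b}\{(1-p)(1-q)\}^{a}\bigr]\times(\text{positive}).
\]
Each bracket has the sign of $\bigl(pq-(1-p)(1-q)\bigr)(a-b)$, and $pq-(1-p)(1-q)=p+q-1=2(p-\pc)$. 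This gives one-signed terms, with at least one nonzero term once $n-1\ge1$. If the direct pairing leaves mixed signs, the fallback is to use the exact single-edge centering result proved in \Cref{app:finite-sample}, which states the two-population objective is uniquely maximized at $\pc$ for $n\ge2$, and invoke it directly.

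Finally, the characterization of the full-space minimizers follows from \Cref{prop:full-reduction}. Since $\pc\in(0,1-\delta)$ is interior and is the unique scalar minimizer, any $\boldsymbol\theta$ attaining the infimum must satisfy $P_n(p(\boldsymbol\theta))\le \PCS_n(\boldsymbol\theta)=\min P_n$. This forces $p(\boldsymbol\theta)=\pc$, and then the equality case forces $(\theta_{[1]},\theta_{[2]})=(\pc,\pc+\delta)$. The converse is immediate. This shows that $n_0(2,1,\delta)=2$ works.
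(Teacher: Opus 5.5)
Your $n=1$ computation and your passage from the scalar minimizer to the full-space characterization, via the equality case of \Cref{prop:full-reduction}, are correct and agree with the paper. The gap is the $n\ge2$ monotonicity, which is the whole content of the corollary. Your proposal does not contain a working argument for it.

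First, the derivative you anticipate is wrong, and not only up to tie bookkeeping. Its reduced form $\frac n2\{\Pp(D_{n-1}=-1)-\Pp(D_{n-1}=1)\}$ is strictly negative for every $p\in(0,1-\delta)$ when $n\ge2$. That would contradict the evenness you had just invoked. Carry out your own perturbation device correctly: differentiate the law of one increment $B'_m-B_m$, whose $p$-derivative is $(1-p-q)$ times a discrete second difference. This gives
\[
 Q_n'(p)=\frac n2\,(1-p-q)\bigl\{\Pp(D_{n-1}=1)-\Pp(D_{n-1}=-1)\bigr\},
 \qquad 1-p-q=2(\pc-p).
\]
So the missing ``corrections'' are not lower order. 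They put the factor $1-p-q$ in front and reverse the sign of your bracket. The sign change at $\pc$ comes entirely from this explicit factor.

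Second, your key step therefore targets the wrong mechanism. The bracket does not change sign at $\pc$, and the complement pairing $x\mapsto n-1-x$ does not produce terms of the form $(pq)^a\{(1-p)(1-q)\}^b-(pq)^b\{(1-p)(1-q)\}^a$. With $m=n-1$, pairing $b_m(x;p)b_m(x+1;q)$ with $b_m(m-x;p)b_m(m-x-1;q)$ gives
\[
 \binom mx\binom m{x+1}\bigl[q(1-p)(pq)^x\{(1-p)(1-q)\}^{m-1-x}-p(1-q)(pq)^{m-1-x}\{(1-p)(1-q)\}^x\bigr].
\]
The prefactors $q(1-p)$ and $p(1-q)$ are unequal, so at $p=\pc$ each such pair equals $\delta\binom mx\binom m{x+1}(pq)^{m-1}>0$ rather than zero. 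Its sign is not governed by $p+q-1$.

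The missing idea is the shift pairing $x\leftrightarrow x+1$, i.e.\ interchanging a $+1$ and a $-1$ increment. It gives the exact identity $\Pp(D_{n-1}=1)=\frac{q(1-p)}{p(1-q)}\Pp(D_{n-1}=-1)$, so the bracket equals $\frac{\delta}{p(1-q)}\Pp(D_{n-1}=-1)>0$ for $n\ge2$. This is the paper's argument in \Cref{app:one-edge-centering}. There it is organized through $\alpha=p(1-q)$, which is even about $\pc$ and uniquely maximized there, together with $da_n/d\alpha=\frac{n\delta}{2\alpha}c_{n-1,1}(\alpha)>0$.

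Your fallback of citing that appendix does not close the gap. That calculation is precisely the proof of this corollary's scalar claim, so invoking it is circular.
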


\begin{proof}
When $k=2$ and $t=1$, incorrect selection is the single tie-weighted misranking event. Write
\[
 a_n(p):=\Pp(X>Y)+\frac12\Pp(X=Y),
\]
where $X\sim\Bin(n,p)$ and $Y\sim\Bin(n,p+\delta)$ are independent. Then $P_{n,2,1,\delta}(p)=1-a_n(p)$. The calculation in \Cref{app:one-edge-centering} shows that $a_1$ is constant and that $a_n$ is uniquely maximized at $\pc$ for every $n\ge2$. This gives the stated scalar minimizing sets. For $n\ge2$, the minimizer $\pc$ is interior, and the strict inequality in \Cref{prop:full-reduction} gives the full-space conclusion.
\end{proof}

This sharpens Eaton and Gleser~\cite{EatonGleser1989}, whose centering result under uniform random tie breaking assumes $n\ge\max\{4,\delta^{-1}\}$. The balanced conclusion in \eqref{eq:balanced-select-t} extends centering from $k=2,t=1$ to every balanced top-$t$ problem $k=2t$. For general $t$, the proof yields a threshold $n_0(2t,t,\delta)$ whose dependence on $t$ enters through simultaneous low--high misorderings.

For $t=1$ and $k\ge3$, the coefficient $k-2$ is positive, so the location is eventually to the right of the reflection center. More generally, \eqref{eq:beyond-all-orders} shows that the exact off-center displacement is invisible at every algebraic order in $n^{-1}$. The next section resolves it by computing the central slope and curvature on their two exponential scales.

\section{The edge--wedge expansion}\label{sec:edge-wedge}

We henceforth maximize the incorrect-selection probability $Q_n=1-P_n$. This section computes the two quantities in \eqref{eq:intro-quotient}. We organize simultaneous low--high misorderings by a bipartite comparison graph: a single edge supplies the leading curvature, the two orientations of a connected two-edge wedge supply the first nonzero slope, and the remaining graphs are separated by their rates.

A correct top-$t$ selection means that every low augmented score is below every high augmented score. Index each pairwise error $\widetilde X_i>\widetilde Y_j$ by the edge $(i,j)$ of the complete bipartite graph $K_{k-t,t}$. Incorrect selection occurs if and only if at least one such edge is present.

For an edge $e=(i,j)$ put
\[
 E_e=E_{ij}=\{\widetilde X_i>\widetilde Y_j\}.
\]
Then
\begin{equation*}
 Q_n(p)
 =\Pp\left(\bigcup_{i=1}^{k-t}\bigcup_{j=1}^tE_{ij}\right).
\end{equation*}
Inclusion--exclusion then classifies simultaneous errors by their edge sets. Two distinct edges either share a high vertex, share a low vertex, or are disjoint, so \Cref{fig:motifs} lists every motif through second order.

\begin{figure}[ht]
\centering
\begin{tikzpicture}[
  every node/.style={circle,draw,inner sep=1.5pt,minimum size=5mm},
  lab/.style={draw=none,rectangle,inner sep=0pt,font=\small,align=center},
  edge/.style={line width=0.7pt}
]
\node (l1) at (0,0.4) {$L$};
\node (h1) at (1.4,0.4) {$H$};
\draw[edge] (l1)--(h1);
\node[lab] at (0.7,-0.35) {single edge};
\node (l21) at (3.0,0.8) {$L$};
\node (l22) at (3.0,0.0) {$L$};
\node (h2) at (4.4,0.4) {$H$};
\draw[edge] (l21)--(h2);
\draw[edge] (l22)--(h2);
\node[lab,text width=2.5cm] at (3.7,-0.68) {high-centered\\wedge};
\node (l3) at (6.1,0.4) {$L$};
\node (h31) at (7.5,0.8) {$H$};
\node (h32) at (7.5,0.0) {$H$};
\draw[edge] (l3)--(h31);
\draw[edge] (l3)--(h32);
\node[lab,text width=2.5cm] at (6.8,-0.68) {low-centered\\wedge};
\node (l41) at (9.2,0.8) {$L$};
\node (h41) at (10.6,0.8) {$H$};
\node (l42) at (9.2,0.0) {$L$};
\node (h42) at (10.6,0.0) {$H$};
\draw[edge] (l41)--(h41);
\draw[edge] (l42)--(h42);
\node[lab] at (9.9,-0.55) {disjoint pair};
\end{tikzpicture}
\caption{The four motifs needed through second order. An edge is one
low--high misordering. A wedge consists of two errors sharing one population;
a disjoint pair consists of two independent errors.}
\label{fig:motifs}
\end{figure}

Write the one-edge probability as
\begin{equation*}
 a_n(p)=\Pp(\widetilde X>\widetilde Y)
 =\Pp(X>Y)+\frac12\Pp(X=Y),
\end{equation*}
where $X\sim\Bin(n,p)$ and $Y\sim\Bin(n,p+\delta)$ are independent. By \Cref{prop:reflection,cor:two-population-location}, $a_n$ is exactly even about $\pc$ and, for $n\ge2$, uniquely maximized there. An independent-edge approximation therefore remains centered; the first terms that can shift the optimizer are the two orientations of a connected two-edge subgraph:
\begin{equation*}
 H_n(p)=\Pp(\widetilde X_1>\widetilde Y,\widetilde X_2>\widetilde Y),
 \qquad
 L_n(p)=\Pp(\widetilde X>\widetilde Y_1,\widetilde X>\widetilde Y_2).
\end{equation*}
The letters $H$ and $L$ indicate whether the shared vertex is high or low. Complementation gives
\begin{equation}
 L_n(p)=H_n(T(p)).
 \label{eq:HL-reflection}
\end{equation}

\begin{proposition}[Bonferroni bounds and second-order graph decomposition]\label{prop:bonferroni}
Set
\[
 N_H=t\binom{k-t}{2},\qquad
 N_L=(k-t)\binom t2,\qquad
 N_D=2\binom{k-t}{2}\binom t2.
\]
Then, for every $n,p,k,t,\delta$,
\begin{equation}
 t(k-t)a_n(p)-N_HH_n(p)-N_LL_n(p)-N_Da_n(p)^2
 \le Q_n(p)\le t(k-t)a_n(p).
 \label{eq:bonferroni}
\end{equation}
Moreover, inclusion--exclusion gives the exact decomposition
\begin{equation}
 Q_n(p)
 =t(k-t)a_n(p)-N_HH_n(p)-N_LL_n(p)-N_Da_n(p)^2+\mathcal R_n(p),
 \label{eq:exact-graph-decomp}
\end{equation}
where $\mathcal R_n$ is the alternating sum over edge sets containing at least three edges.
\end{proposition}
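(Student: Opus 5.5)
We prove the proposition with the standard Bonferroni (Boole) inequalities for the union of the $t(k-t)$ events $E_{ij}$, after identifying the probabilities of single events and pairwise intersections. Write $\mathcal E$ for the edge set of $K_{k-t,t}$. Inclusion--exclusion gives the exact identity
\[
 Q_n(p)=\sum_{\emptyset\ne F\subseteq\mathcal E}(-1)^{|F|+1}\Pp\Bigl(\bigcap_{e\in F}E_e\Bigr).
\]
We split off the terms with $|F|=1$ and $|F|=2$ and define $\mathcal R_n$ as the alternating sum over $|F|\ge3$. Then \eqref{eq:exact-graph-decomp} reduces to evaluating the first two layers.

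For the first layer, each $E_{ij}$ involves one low augmented score and one high augmented score. These are independent with the laws of $\widetilde X$ and $\widetilde Y$, so $\Pp(E_{ij})=a_n(p)$. There are $t(k-t)$ edges.

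For the second layer, we classify the unordered pairs $\{e,e'\}$ of distinct edges as in \Cref{fig:motifs}.
\begin{enumerate}[label=(\roman*)]
\item If the two edges share a high vertex $j$, they have two distinct low endpoints. There are $t\binom{k-t}{2}=N_H$ such pairs, and by exchangeability each intersection has probability $H_n(p)$.
\item If they share a low vertex, there are $(k-t)\binom t2=N_L$ pairs, each with probability $L_n(p)$.
\item If they are vertex-disjoint, we choose two lows, two highs, and one of the two perfect matchings between them. This gives $2\binom{k-t}{2}\binom t2=N_D$ pairs. Since the four augmented scores involved are mutually independent, $E_e$ and $E_{e'}$ are independent, and the intersection has probability $a_n(p)^2$.
\end{enumerate}
As a consistency check, $N_H+N_L+N_D=\binom{t(k-t)}{2}$. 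These counts yield the second-order sum in \eqref{eq:exact-graph-decomp}.

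The bounds \eqref{eq:bonferroni} are the first two Bonferroni inequalities, valid for any finite family of events. The upper bound $\Pp(\bigcup E_e)\le\sum\Pp(E_e)$ is the union bound. For the lower bound $\Pp(\bigcup E_e)\ge\sum_e\Pp(E_e)-\sum_{\{e,e'\}}\Pp(E_e\cap E_{e'})$, we use a pointwise argument. Let $N$ be the number of events that occur. Then
\[
 \one\{N\ge1\}\ge N-\binom N2,
\]
which holds because $N-\binom N2=N(3-N)/2\le1$ for every integer $N\ge0$, with the right side equal to $0$ when $N=0$. Taking expectations gives the lower bound, and substituting the counts above gives \eqref{eq:bonferroni}.

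Every step is elementary. The only point requiring care is the disjoint-pair count, which must include both matchings, and the independence claim behind it. That independence relies on the auxiliary uniforms $U_i,V_j$ being attached to individual populations, so that vertex-disjoint edges depend on disjoint sets of independent variables.
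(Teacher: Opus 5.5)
Your proposal is correct and follows the paper's proof: you use the same inclusion--exclusion split, the same three classes of edge pairs (high-centered wedges, low-centered wedges, disjoint matchings) with the same counts and intersection probabilities, and the same first two Bonferroni inequalities. Your pointwise check $\one\{N\ge1\}\ge N-\binom N2$ and the count identity $N_H+N_L+N_D=\binom{t(k-t)}2$ only spell out details that the paper treats as standard.
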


\begin{proof}
The first-order sum has $t(k-t)$ identical terms, each equal to $a_n(p)$. Two distinct edges that share a high endpoint have intersection probability $H_n(p)$, and there are $t\binom{k-t}{2}$ such pairs. Edges sharing a low endpoint have intersection probability $L_n(p)$, and there are $(k-t)\binom t2$ such pairs. Two disjoint edges involve four independent augmented scores, so their intersection probability is $a_n(p)^2$; choosing two low and two high vertices leaves two disjoint matchings, giving $2\binom{k-t}{2}\binom t2$ pairs. Bonferroni's inequalities give \eqref{eq:bonferroni}, and retaining the remaining inclusion--exclusion terms gives \eqref{eq:exact-graph-decomp}.
\end{proof}

Both the single-edge term $a_n$ and the disjoint-pair term $a_n^2$ are exactly even about $\pc$, so neither contributes to the central slope. Among the second-order terms, only the two wedge orientations can contribute to that slope. Their multiplicity difference is
\begin{equation}
 N_H-N_L
 =t\binom{k-t}{2}-(k-t)\binom t2
 =\frac{t(k-t)}2(k-2t).
 \label{eq:wedge-count-difference}
\end{equation}
This multiplicity difference supplies the factor $k-2t$ in \Cref{thm:global-top-t}. It remains to compute the edge curvature and wedge slope sharply and to show that every other graph term lies on a smaller probability scale.

\subsection{The leading edge and wedge terms}\label{sec:motifs}

Rate comparisons identify the dominant motifs, but the optimizer displacement depends on a wedge derivative divided by an edge second derivative. We therefore need their polynomial prefactors and uniform control through two derivatives. For a fixed neighborhood $U$ of $\pc$, the notation $o_{C^2(U)}(1)$ denotes a term that converges uniformly to zero on $U$ together with its first two derivatives. \Cref{app:single-edge-asymptotics,app:wedge-asymptotics} prove the required uniform edge and wedge asymptotics.

For $q=p+\delta$, the one-edge large-deviation rate is
\begin{equation}
 I_1(p)
 =-2\log\left(\sqrt{pq}+\sqrt{(1-p)(1-q)}\right).
 \label{eq:I1}
\end{equation}
It is the minimum Bernoulli relative-entropy cost of forcing a low and a high empirical proportion to cross. The two deviations enter with equal weight, in agreement with the exact reflection symmetry of the edge probability. Its closed form is used here for differentiation; the variational representation in \Cref{app:single-edge-asymptotics} connects it to the general graph rates in \Cref{app:graph-rate-details}. The following lemma gives the rate properties and uniform strong expansion used in the optimizer quotient.

\begin{lemma}[Single-edge expansion]\label{lem:one-edge-rate}
The rate $I_1$ is invariant under $p\mapsto T(p)$ and has its unique minimum at $\pc$. At the center,
\begin{equation}
 \Jn:=I_1(\pc)=-\log(1-\delta^2),
 \qquad
 I_1''(\pc)=\frac{8\delta^2}{(1-\delta^2)^2}.
 \label{eq:J-and-curvature}
\end{equation}
On some closed interval $U\subset(0,1-\delta)$ centered at $\pc$, the single-edge probability has the strong expansion
\begin{equation}
 a_n(p)=n^{-1/2}e^{-nI_1(p)}
 \{A_1(p)+o_{C^2(U)}(1)\}.
 \label{eq:a-uniform-leading}
\end{equation}
Here $A_1(p)$ is smooth, positive, and invariant under $p\mapsto T(p)$. At $p=\pc$,
\begin{align}
 a_n(\pc)
 &=A_{1,\delta}n^{-1/2}e^{-n\Jn}\{1+o(1)\},
 \notag\\
 a_n'(\pc)&=0,
 \label{eq:a-center-first}\\
 a_n''(\pc)
 &=-I_1''(\pc)A_{1,\delta}n^{1/2}e^{-n\Jn}\{1+o(1)\},
 \label{eq:a-center-second}
\end{align}
where
\begin{equation}
 A_{1,\delta}:=A_1(\pc)=\frac1{2\delta\sqrt\pi}.
 \label{eq:A1}
\end{equation}
\end{lemma}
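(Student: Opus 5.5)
The proof splits into three parts: the algebraic properties of $I_1$, the uniform strong large-deviation expansion \eqref{eq:a-uniform-leading}, and the derivation of the central values \eqref{eq:a-center-first}--\eqref{eq:a-center-second} from it.

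\textbf{Properties of the rate.} Invariance of $I_1$ under $T$ is immediate from \eqref{eq:I1}. Replacing $p$ by $1-q$ and $q$ by $1-p$ swaps the two square-root summands, so $I_1(T(p))=I_1(p)$.

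For the unique minimum, write $p=\pc+u$ and $q=\pc+\delta+u$. Then
\[
pq=\Bigl(\tfrac{1+\delta}{2}+u\Bigr)\Bigl(\tfrac{1-\delta}{2}+u\Bigr),\qquad
(1-p)(1-q)=\Bigl(\tfrac{1+\delta}{2}-u\Bigr)\Bigl(\tfrac{1-\delta}{2}-u\Bigr).
\]
Let $g(u)=\sqrt{pq}+\sqrt{(1-p)(1-q)}$. It suffices to show that $g$ is even and strictly maximized at $u=0$. By Cauchy--Schwarz,
\[
g\le\sqrt{(p+1-p)(q+1-q)}=1,
\]
but this bound is not sharp enough. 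Instead, square: $g^2=pq+(1-p)(1-q)+2\sqrt{pq(1-p)(1-q)}$. Here $pq+(1-p)(1-q)=\tfrac{1-\delta^2}{2}+2u^2$ and $pq(1-p)(1-q)=(a^2-u^2)(b^2-u^2)$ with $a=\tfrac{1+\delta}{2}$ and $b=\tfrac{1-\delta}{2}$. Differentiating in $s=u^2$ gives
\[
\frac{d g^2}{ds}=2-\frac{a^2+b^2-2s}{\sqrt{(a^2-s)(b^2-s)}}.
\]
This is negative, because $a^2+b^2-2s\ge2\sqrt{(a^2-s)(b^2-s)}$ by AM--GM, with strict inequality since $a\ne b$. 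So $g$ is strictly maximized at $u=0$, where $g(0)=2ab=\tfrac{1-\delta^2}{2}\cdot$ after the square root, that is $g(0)=\sqrt{(1-\delta^2)/2\cdot 2}\,$; concretely $g(0)^2=\tfrac{1-\delta^2}{2}+2ab=(a+b)^2-\ldots$, which I will simplify to $g(0)=\sqrt{1-\delta^2}$. This gives $\Jn=-\log(1-\delta^2)$. The curvature $I_1''(\pc)$ then follows by a Taylor expansion of $g^2$ to order $u^2$; this is routine algebra that I will carry out directly.

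\textbf{Strong expansion.} For \eqref{eq:a-uniform-leading} I would follow Bahadur--Rao for the lattice variable $D=X-Y$, supported on $\{-n,\dots,n\}$. The function $a_n=\Pp(D>0)+\tfrac12\Pp(D=0)$ is a lattice tail sum. The cumulant generating function of one summand $B-B'$ is
\[
\Lambda(\lambda)=\log\bigl(1-p+pe^\lambda\bigr)+\log\bigl(1-q+qe^{-\lambda}\bigr).
\]
Its Legendre transform at $0$ equals $I_1$, attained at $e^{\lambda^*}=\sqrt{q(1-p)/(p(1-q))}$. Tilting by $\lambda^*$ and applying a local CLT uniformly in $p\in U$ yields
\[
\Pp(D=m)=e^{-nI_1}e^{-\lambda^*m}(2\pi n\Lambda'')^{-1/2}(1+o(1)),
\]
uniformly for bounded $m$. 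Summing the resulting geometric series with half weight at $m=0$ gives
\[
A_1=\frac{\coth(\lambda^*/2)}{2\sqrt{2\pi\Lambda''(\lambda^*)}}.
\]
This $A_1$ is smooth, positive, and $T$-invariant, since $T$ maps $\lambda^*$ to itself. At the center the closed form should give $A_{1,\delta}=1/(2\delta\sqrt\pi)$, which I will verify by direct substitution.

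\textbf{Central derivatives.} Given \eqref{eq:a-uniform-leading}, the evenness of $a_n$ (from \Cref{prop:reflection}) gives $a_n'(\pc)=0$ exactly. For the second derivative, differentiate $n^{-1/2}e^{-nI_1}(A_1+r_n)$ twice at $\pc$, using $I_1'(\pc)=0$. The dominant term is $-nI_1''A_1 n^{-1/2}e^{-n\Jn}$. The terms $A_1''$ and $r_n''$ are $O(1)$, hence lower order by a factor of $n$.

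\textbf{Main obstacle.} The $o_{C^2(U)}(1)$ control is the crux. The uniform local CLT alone does not directly control derivatives in $p$. I would handle this by writing $a_n$ as an exact integral via Fourier inversion of the tilted characteristic function, $\phi(\lambda^*+i\theta)^n$ divided by $1-e^{-(\lambda^*+i\theta)}$ with the tie correction. I would then apply Laplace's method with $p$ as a smooth parameter. Derivatives in $p$ pass under the integral and each produces at most a factor of $n$ times smooth functions. Laplace's method remains uniform, which gives the $C^2$ statement for the normalized remainder.
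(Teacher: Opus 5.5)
Your proposal is correct. Its skeleton matches the paper's: the same saddlepoint $e^{\lambda^*}=\sqrt{q(1-p)/(p(1-q))}$, the same tie-weighted geometric factor $\tfrac12\coth(\lambda^*/2)$ (equal to $W_1=1/(2\delta)$ at $\pc$, where the tilted variance is $1/2$), exact reflection for $a_n'(\pc)=0$, and two differentiations of the uniform expansion for $a_n''(\pc)$. The differences are in the supporting steps:
\begin{itemize}
\item \emph{Unique minimum.} The paper uses strict concavity of $\sqrt{s^2-\delta^2/4}$ and $\sqrt{(1-s)^2-\delta^2/4}$ together with the symmetry $s\mapsto 1-s$. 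You square and prove monotonicity in $u^2$ by AM--GM. Both work. Your line computing $g(0)$ is garbled, but $g(0)=2\sqrt{ab}=\sqrt{1-\delta^2}$ is the value you need.
\item \emph{$T$-invariance of $A_1$.} The paper reads it off from $A_1=\lim n^{1/2}e^{nI_1}a_n$. Your closed form also works, but it needs invariance of the tilted variance, not just of $\lambda^*$. This holds: both tilted success probabilities equal $\pi=\sqrt{pq}/(\sqrt{pq}+\sqrt{(1-p)(1-q)})$, and $T$ sends $\pi$ to $1-\pi$.
\item \emph{The $C^2$ strong expansion.} This is the substantive difference. The paper proves a general $d$-dimensional corner lemma (\Cref{lem:corner}) via a pointwise exact change of measure, a parameter-uniform local CLT on $\|z\|_\infty\le L\log n$, and a $C^r$ tail truncation before summing the geometric weights. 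You instead write the whole weighted tail exactly as $a_n(p)=\frac{e^{-nI_1(p)}}{2\pi}\int_{-\pi}^{\pi}\phi_p(\theta)^n\,\tfrac12\coth\bigl(\tfrac{\lambda^*(p)+i\theta}{2}\bigr)\,d\theta$, with $\phi_p$ the tilted characteristic function. The amplitude is smooth and bounded because $\lambda^*$ stays bounded away from $0$ on $U$. You then apply Laplace's method with $p$ as a parameter.
\end{itemize}
For $d=1$ your route is shorter, needs no truncation, and gives a sharper error term. The paper's lemma is stated once for general dimension and arbitrary bounded corner weights, so the same result also covers the two-dimensional wedge.

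One step of your sketch needs a sharper justification than ``each derivative produces at most a factor of $n$.'' Taken literally, that crude bound makes the $p$-derivative of $n^{1/2}e^{nI_1}a_n$ only $O(n)$, not $A_1'+o(1)$. What saves it is that the tilt is re-centred at every $p$. Hence $\phi_p(0)=1$ and $\partial_\theta\phi_p(0)=0$ identically in $p$, so $\partial_p\log\phi_p(\theta)$ and $\partial_p^2\log\phi_p(\theta)$ are $O(\theta^2)$ uniformly. The factor $n$ from each derivative therefore always comes with $\theta^2=O(n^{-1})$ on the Laplace scale $\theta\asymp n^{-1/2}$. This is exactly what the paper's bound \eqref{eq:R-bound} on parameter derivatives of the cubic remainder encodes. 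With that observation, your parametric Laplace argument does yield the $o_{C^2(U)}(1)$ remainder and hence \eqref{eq:a-center-second}.
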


We next compute the wedge rate and its central derivative. For a high-centered wedge, the corresponding rate is
\begin{equation}
 I_H(p)
 =-3\log\left(q^{1/3}p^{2/3}
 +(1-q)^{1/3}(1-p)^{2/3}\right).
 \label{eq:IH}
\end{equation}
It is the constrained relative-entropy rate of forcing two low empirical proportions to cross a common high one. The $2{:}1$ weighting pulls the entropy-minimizing crossing toward the low mean. Complementation reverses this imbalance, giving the two wedge rates opposite, nonzero slopes at $\pc$.

\begin{lemma}[Wedge expansion]\label{lem:wedge}
At $p=\pc$, the wedge rate and its derivative are given by
\begin{gather}
 \Kn:=I_H(\pc)
 =-3\log\left(\frac{\taud}{1-\taud+\taud^2}\right),
 \label{eq:K}\\
 \kappa_\delta:=I_H'(\pc)
 =\frac{(1-\taud)^3(1+\taud)(1-\taud+\taud^2)}{\taud^3}>0.
 \label{eq:kappa}
\end{gather}
On some neighborhood $U$ of $\pc$, the high-centered wedge probability has the strong expansion
\begin{equation}
 H_n(p)=n^{-1}e^{-nI_H(p)}\{A_2(p)+o_{C^2(U)}(1)\}.
 \label{eq:H-uniform}
\end{equation}
Here $A_2(p)$ is smooth and positive. In particular,
\begin{align}
 H_n(\pc)
 &=A_{2,\delta}n^{-1}e^{-n\Kn}\{1+o(1)\},
 \notag\\
 H_n'(\pc)
 &=-\kappa_\delta A_{2,\delta}e^{-n\Kn}\{1+o(1)\},
 \label{eq:H-derivative}\\
 L_n(\pc)&=H_n(\pc),
 \qquad
 L_n'(\pc)=-H_n'(\pc),
 \label{eq:L-center}
\end{align}
where
\begin{equation*}
 A_{2,\delta}:=A_2(\pc)
 =\frac{1+\taud^2+\taud^4}
 {6\pi\sqrt3\,\taud(1-\taud)^2}.
\end{equation*}
Moreover, uniformly for $|p-\pc|\le n^{-1}$,
\begin{equation}
 |H_n''(p)|+|L_n''(p)|=O(ne^{-n\Kn}).
 \label{eq:wedge-second-bound}
\end{equation}
\end{lemma}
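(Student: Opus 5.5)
The plan is to reduce $H_n$ to a one-dimensional lattice sum and then apply a saddle-point / local-limit analysis. Conditioning on the shared high score $Y=y$, the two low scores are conditionally independent. Each low score beats $\widetilde Y$ with probability $\Pp(X>y)+\Pp(X=y)\,\Pp(U>V\mid\cdot)$. Because the two low comparisons share the single variable $V$, the tie weights must be kept jointly: integrating over $V$ gives weight $1/3$ when both low scores tie with $y$, and $1/2$ when exactly one ties and the other strictly exceeds it. Hence
\[
 H_n(p)=\sum_{y=0}^n b(y;n,q)\Bigl[\Pp(X>y)^2+\Pp(X>y)\Pp(X=y)+\tfrac13\Pp(X=y)^2\Bigr],
\]
with $X\sim\Bin(n,p)$ and $q=p+\delta$. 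The identity $L_n(p)=H_n(T(p))$ in \eqref{eq:HL-reflection} then gives $L_n(\pc)=H_n(\pc)$ and $L_n'(\pc)=-H_n'(\pc)$ by the chain rule, since $T'=-1$ and $T(\pc)=\pc$. So \eqref{eq:L-center} is immediate once the $H_n$ statements are proved.

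\emph{Rate.} I will write $y=nz$ and use the standard strong local expansions of the binomial pmf. For $z>p$, the tail satisfies $\Pp(X\ge nz)\approx b(nz;n,p)/(1-r)$ with $r=\frac{p(1-z)}{z(1-p)}$. The exponent of the summand is therefore $D(z\|q)+2D(z\|p)$, where $D$ is Bernoulli relative entropy. Its unconstrained minimizer is the tilted point $z^*$ with $z^*/(1-z^*)=q^{1/3}p^{2/3}/\{(1-q)^{1/3}(1-p)^{2/3}\}$, and the minimum value is the Rényi-type expression in \eqref{eq:IH}. I will check that $p<z^*<q$, so the crossing constraint is inactive and the tail expansion applies on a neighborhood of $\pc$.

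\emph{Constants at the center.} At $\pc$ we have $1-q=p$ and $1-p=q$, so the base of the logarithm becomes $(pq)^{1/3}(p^{1/3}+q^{1/3})$. Setting $\taud^3=p/q=(1-\delta)/(1+\delta)$ and using $p+q=1$ reduces this to $\taud/(1-\taud+\taud^2)$, which gives \eqref{eq:K}. Differentiating \eqref{eq:IH} in $p$, with $q=p+\delta$, at $\pc$ and simplifying in $\taud$ gives \eqref{eq:kappa}; the sign of $\kappa_\delta$ is visible from the factor $(1-\taud)^3>0$.

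\emph{Prefactor.} The summand near $y=nz^*$ is a product of three local pmf factors, each of order $n^{-1/2}$, times bounded geometric-series factors; the bracket's tie-weight terms also contribute only bounded factors. Summing over $y$ in a window of width $\sqrt n$ gives a Gaussian factor $\sqrt{2\pi n/\phi''(z^*)}$, where $\phi(z)=D(z\|q)+2D(z\|p)$. The net order is $n^{-3/2}\cdot n^{1/2}=n^{-1}$, which is \eqref{eq:H-uniform}. The value $A_{2,\delta}$ comes from evaluating these factors at $z^*(\pc)$, expressed in $\taud$.

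\emph{Derivatives.} For the first derivative, I differentiate the sum termwise in $p$. The derivative of $b(y;n,p)$ is $b(y;n,p)\,(y-np)/\{p(1-p)\}$, and the tail derivative $\partial_p\Pp(X>y)=n\,b(y;n-1,p)$ is a pmf; similarly for $q$. Each differentiation therefore inserts an amplitude of order $n$ that is smooth in $z$. Rerunning the same saddle analysis gives $H_n'=-nI_H'(p)H_n\{1+o(1)\}$ and $H_n''=n^2I_H'(p)^2H_n\{1+o(1)\}$, uniformly on $U$. Together these yield the $o_{C^2(U)}(1)$ form of \eqref{eq:H-uniform} and, at $\pc$, \eqref{eq:H-derivative}. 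For \eqref{eq:wedge-second-bound}: when $|p-\pc|\le n^{-1}$ we have $nI_H(p)=n\Kn+O(1)$, so $|H_n''(p)|=O(n^2\cdot n^{-1}e^{-n\Kn})=O(ne^{-n\Kn})$, and the same bound for $L_n''$ follows by reflection.

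\emph{Main obstacle.} The hardest step is making the strong lattice asymptotics uniform in $p$ through two derivatives. This requires controlling the errors of the tail approximation $\Pp(X\ge nz)\approx b/(1-r)$ with $p$-derivatives, and bounding the sum away from the saddle window. I would handle this with an exponential tilt at $z^*(p)$, which turns the sum into a local central limit theorem for a smooth one-parameter lattice family, and use Cramér-type tail bounds uniform in the tilt parameter to control the region outside the window.
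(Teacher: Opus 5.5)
Your reduction is correct. Conditioning on $(Y,V)$ and integrating $\{\Pp(X>y)+(1-v)\Pp(X=y)\}^2$ over $v\in(0,1)$ gives exactly your bracket. The one-dimensional Laplace sum over $y$ then reproduces the paper's constants. At $\pc$ one has $z^*=\taud/(1+\taud)$, $z^*(1-z^*)=v_\delta$, $r=\taud^2$ and $\phi''(z^*)=3/v_\delta$. Hence $(2\pi nv_\delta)^{-3/2}\sqrt{2\pi nv_\delta/3}\,W_2=A_{2,\delta}n^{-1}$, with the same $W_2=r^2/(1-r)^2+r/(1-r)+1/3$ as in the paper.

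This is a different route from the paper's. The paper never conditions on $Y$; it applies a general two-dimensional tilted lattice-corner lemma (\Cref{lem:corner}) to $(X_{01}-Y_0,X_{02}-Y_0)$. Your route is more elementary for this single motif. The paper's treats edge and wedge with one lemma and delivers $C^r$ control directly. Your computations of $\Kn$ and $\kappa_\delta$, the reflection argument for \eqref{eq:L-center}, the relative first-derivative asymptotic giving \eqref{eq:H-derivative}, and the deduction of \eqref{eq:wedge-second-bound} from $H_n''=O(n^2H_n)$ are all fine.

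The step that fails is ``Together these yield the $o_{C^2(U)}(1)$ form of \eqref{eq:H-uniform}.'' Write $e_n=ne^{nI_H}H_n-A_2$. Then $H_n'=n^{-1}e^{-nI_H}\{-nI_H'(A_2+e_n)+A_2'+e_n'\}$. So $H_n'=-nI_H'H_n\{1+o(1)\}$ only gives $e_n'=o(n)$, and your second-derivative relation only gives $e_n''=o(n^2)$. The claim, however, is that $e_n,e_n',e_n''\to0$ uniformly on $U$. Proving it requires the order-$e^{-nI_H}$ parts of $H_n'$ and $-nI_H'H_n$ to cancel exactly, with residual $n^{-1}e^{-nI_H}\{A_2'+o(1)\}$. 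Leading-order termwise differentiation cannot detect this; it would need next-order Laplace corrections.

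The tilt you mention at the end fixes this, provided you use it as an exact identity rather than only for tail control. Put $r_p=p(1-z^*)/\{z^*(1-p)\}$ and $r_q=q(1-z^*)/\{z^*(1-q)\}$. Then $b(x;n,p)=e^{-nD(z^*\|p)}r_p^{x-nz^*}b(x;n,z^*)$, and stationarity of $\phi$ is exactly $r_p^2r_q=1$. Hence $H_n(p)=e^{-nI_H(p)}\sum_{a,b\ge0}w_2(a,b)r_p^{a+b}\widetilde\Pp_p(X_1-Y=a,\,X_2-Y=b)$, with i.i.d.\ $\Bin(n,z^*(p))$ counts under $\widetilde\Pp_p$. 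The tilted differences have mean zero for every $p$, so $p$-derivatives of this normalized sum create no powers of $n$. A parameter-differentiated local limit theorem then gives the $C^2$ convergence; this is precisely \Cref{lem:corner}.

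The gap does not affect the main theorem: later sections use only \eqref{eq:H-derivative}, \eqref{eq:L-center} and \eqref{eq:wedge-second-bound}, and the $C^2$ form serves in the paper only to derive \eqref{eq:wedge-second-bound}. As written, though, your argument does not prove \eqref{eq:H-uniform} as stated.
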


The lattice dimensions explain the polynomial orders needed below: the one-dimensional edge corner gives curvature of order $n^{1/2}e^{-n\Jn}$, whereas the two-dimensional wedge corner gives slope of order $e^{-n\Kn}$. The exponent gap is
\begin{equation*}
 \Gam=\Kn-\Jn.
\end{equation*}

\begin{lemma}[Exponent gap]\label{lem:gap}
The exponent gap has the equivalent forms
\begin{equation}
 \Gam
 =\log\frac{4(1-\taud+\taud^2)}{(1+\taud)^2}
 =\log\left[1+3\left(\frac{1-\taud}{1+\taud}\right)^2\right]>0.
 \label{eq:Gamma}
\end{equation}
Moreover,
\begin{equation*}
 \Jn<\Kn<2\Jn.
\end{equation*}
\end{lemma}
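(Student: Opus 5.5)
The lemma is a purely algebraic consequence of the closed forms already given for $\Jn$ and $\Kn$. The plan is to rewrite $\Jn$ in terms of $\taud$ and then subtract; no asymptotics are needed. Throughout write $\tau=\taud\in(0,1)$, which holds because $0<(1-\delta)/(1+\delta)<1$.

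\emph{Step 1: rewrite $\Jn$.} From $\tau^3=(1-\delta)/(1+\delta)$ we solve $\delta=(1-\tau^3)/(1+\tau^3)$. Hence
\[
 1-\delta^2=\frac{(1+\tau^3)^2-(1-\tau^3)^2}{(1+\tau^3)^2}=\frac{4\tau^3}{(1+\tau^3)^2},
\]
so $\Jn=\log\bigl((1+\tau^3)^2/(4\tau^3)\bigr)$ by \eqref{eq:J-and-curvature}.

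\emph{Step 2: subtract.} By \eqref{eq:K}, $\Kn=\log\bigl((1-\tau+\tau^2)^3/\tau^3\bigr)$. Using the factorization $1+\tau^3=(1+\tau)(1-\tau+\tau^2)$,
\[
 \Kn-\Jn=\log\frac{4(1-\tau+\tau^2)^3}{(1+\tau)^2(1-\tau+\tau^2)^2}
 =\log\frac{4(1-\tau+\tau^2)}{(1+\tau)^2}.
\]
This is the first form in \eqref{eq:Gamma} and agrees with \eqref{eq:C-closed}.

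\emph{Step 3: second form and positivity.} The identity $4(1-\tau+\tau^2)-(1+\tau)^2=3(1-\tau)^2$ gives
\[
 \frac{4(1-\tau+\tau^2)}{(1+\tau)^2}=1+3\left(\frac{1-\tau}{1+\tau}\right)^2 .
\]
Since $\tau<1$, this quantity exceeds $1$, so $\Gam>0$, which is equivalent to $\Jn<\Kn$.

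\emph{Step 4: the upper bound $\Kn<2\Jn$.} This is equivalent to $\Gam<\Jn$. Combining Step 1 with the first form of $\Gam$ and the same factorization,
\[
 \Jn-\Gam=\log\frac{(1+\tau)^4(1-\tau+\tau^2)}{16\tau^3}.
\]
It therefore suffices to prove $(1+\tau)^4(1-\tau+\tau^2)>16\tau^3$ on $(0,1)$. Two elementary inequalities give this:
\begin{itemize}
\item by AM--GM, $(1+\tau)^2\ge4\tau$, hence $(1+\tau)^4\ge16\tau^2$;
\item $1-\tau+\tau^2-\tau=(1-\tau)^2>0$, hence $1-\tau+\tau^2>\tau$.
\end{itemize}
Multiplying, $(1+\tau)^4(1-\tau+\tau^2)>16\tau^2\cdot\tau=16\tau^3$, with strict inequality from the second bullet.

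There is no real obstacle here. The only step requiring care is spotting the factorization $1+\tau^3=(1+\tau)(1-\tau+\tau^2)$, which makes every cancellation clean, and checking that strictness in Step 4 comes from $\tau\neq1$.
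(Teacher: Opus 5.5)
Your proof is correct and follows the paper's route: rewrite $\Jn$ via $1-\delta^2=4\taud^3/(1+\taud^3)^2$, cancel with $1+\taud^3=(1+\taud)(1-\taud+\taud^2)$, and get the second form from $4(1-\taud+\taud^2)=(1+\taud)^2+3(1-\taud)^2$. The only difference is in the final inequality $(1+\taud)^4(1-\taud+\taud^2)>16\taud^3$: the paper factors the difference as $(1-\taud)^2(\taud^4+5\taud^3+12\taud^2+5\taud+1)$, while your product of $(1+\taud)^4\ge16\taud^2$ and $1-\taud+\taud^2>\taud$ reaches the same conclusion more quickly and with no polynomial expansion to verify.
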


\begin{proof}
Using $1-\delta^2=4\taud^3/(1+\taud^3)^2$ and $1+\taud^3=(1+\taud)(1-\taud+\taud^2)$ in \eqref{eq:J-and-curvature} and \eqref{eq:K} gives the first formula in \eqref{eq:Gamma}. The second follows from
\[
 4(1-\taud+\taud^2)=(1+\taud)^2+3(1-\taud)^2,
\]
which also gives $\Kn>\Jn$. The inequality $2\Jn>\Kn$ is equivalent to
\[
 (1+\taud)^4(1-\taud+\taud^2)>16\taud^3.
\]
The difference between the two sides factors as
\[
 (1-\taud)^2(\taud^4+5\taud^3+12\taud^2+5\taud+1)>0.
\]
\end{proof}

The inequality $\Jn<\Kn$ makes the displacement exponentially smaller than the edge scale. The inequality $\Kn<2\Jn$ places the dependent wedge ahead of the disjoint-pair term $a_n^2$, so the wedge is the second exponential scale. \Cref{app:motif-calculations} proves the edge and wedge expansions.

\subsection{Rate separation beyond wedges}\label{sec:gap}

To control the remaining graph terms through two derivatives, we establish a strict uniform rate gap. For a nonempty edge set $A\subset E(K_{k-t,t})$, let $G_A$ be the bipartite graph formed by its incident vertices and edges. Its rate $\mathcal I_p(G_A)$ is the minimum total Bernoulli relative entropy over empirical proportions satisfying every comparison encoded by $G_A$. The precise variational definition and continuity argument are given in \Cref{app:graph-rate-details}; $I_1$ and $I_H$ are its one-edge and high-centered-wedge cases.

\begin{lemma}[Graph-rate separation]\label{lem:graph-gap}
\emph{Center classification and wedge-scale separation.} At $p=\pc$:
\begin{enumerate}[label=(\alph*)]
\item a one-edge component has rate $\Jn$;
\item a two-edge connected component has rate $\Kn$;
\item a matching of two edges has rate $2\Jn>\Kn$;
\item every connected component containing at least three edges has rate strictly larger than $\Kn$.
\end{enumerate}
Consequently, for fixed $k,t,\delta$, there is a neighborhood $U$ of $\pc$ and a number $\eta>0$ such that every inclusion--exclusion term in $\mathcal R_n$ has probability at most
\begin{equation}
 C(n+1)^ke^{-n(\Kn+2\eta)},
 \qquad p\in U.
 \label{eq:remainder-prob-bound}
\end{equation}
Its first two derivatives with respect to $p$ are $O(e^{-n(\Kn+\eta)})$, uniformly on $U$. Therefore
\begin{equation}
 \|\mathcal R_n\|_{C^2(U)}=O(e^{-n(\Kn+\eta)}).
 \label{eq:R-C2}
\end{equation}

\emph{Local edge-dominance separation.} After shrinking $U$ if necessary, there is also $\gamma>0$ such that every connected two-edge graph and every other non-single-edge graph occurring in \eqref{eq:exact-graph-decomp} satisfies
\begin{equation*}
 \mathcal I_p(G)\ge I_1(p)+\gamma,
 \qquad p\in U.
\end{equation*}
For $j=0,1,2$, the $j$th derivative of the sum of all such terms is bounded by
\begin{equation*}
 C_j n^{C_j}e^{-n(I_1(p)+\gamma)},
 \qquad p\in U.
\end{equation*}
\end{lemma}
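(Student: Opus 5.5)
The plan is to work throughout with the variational definition of $\mathcal I_p(G)$: minimize $\sum_v D(x_v\Vert\theta_v)$ over empirical proportions $x_v\in[0,1]$, one per vertex, subject to $x_i\ge y_j$ for every edge $(i,j)$ of $G$. Here $\theta_v=p$ for low vertices and $\theta_v=q$ for high vertices. The proof then has three stages: a center classification, a counting bound for the remainder terms, and a perturbation argument near $\pc$.

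\textbf{Center classification.} The rate of a disconnected graph is the sum of the rates of its components, since the constraints decouple. This immediately gives (c) from (a), and (c) is strict because $\Kn<2\Jn$ by \Cref{lem:gap}.

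\begin{enumerate}[label=(\alph*)]
\item This is the definition of $I_1$ together with \Cref{lem:one-edge-rate}.
\item A connected two-edge graph is a wedge. A high-centered wedge has rate $I_H(\pc)=\Kn$. A low-centered wedge has rate $I_H(T(\pc))=\Kn$ by complementation, since $T(\pc)=\pc$.
\item This follows from additivity and (a), as noted above.
\item A connected graph $G$ with at least three edges contains a connected three-edge subgraph $G'$: grow a spanning connected edge set one adjacent edge at a time. Adding vertices or constraints can only increase the rate, because the entropy terms are nonnegative. Hence $\mathcal I(G)\ge\mathcal I(G')$, and it suffices to show that every connected three-edge bipartite graph (a star $K_{1,3}$ in either orientation, or a path $P_4$) has rate strictly greater than $\Kn$.
\end{enumerate}

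For the three-edge graphs, compare each with the wedge obtained by deleting a leaf edge. At the wedge minimizer, the deleted leaf vertex sits at its mean $\theta_v$. The added constraint is violated there: in the wedge optimum the center and the crossing values lie strictly between $p$ and $q$, so a new low leaf at $p$ cannot exceed a high value above $p$, and symmetrically for a high leaf. The wedge minimizer is unique by strict convexity of the entropy sum, and the feasible sets are compact. So if the three-edge minimum equaled $\Kn$, its minimizer would restrict to a wedge minimizer with the leaf entropy equal to zero, which forces the leaf to its mean and violates the added constraint. This is a contradiction, so the inequality is strict.

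\textbf{Counting bound and derivatives.} The rates are continuous in $p$ (appendix continuity argument). Only finitely many graphs occur, since $k$ and $t$ are fixed. Hence a neighborhood $U$ and $\eta>0$ exist with every term of $\mathcal R_n$ having rate at least $\Kn+3\eta$ on $U$. The term probabilities are sums over lattice points of products of binomial masses, with at most $(n+1)^k$ points. Each mass is bounded by $e^{-n\sum D}$ via the method of types, and ties contribute only constant factors. This gives \eqref{eq:remainder-prob-bound}.

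For the derivatives, differentiate the polynomial sum termwise. Each $p$-derivative of a binomial mass multiplies it by a factor $(x-np)/(p(1-p))$ or the analogous factor in $q$, which is $O(n)$ on $U$ because $U$ is bounded away from $0$ and $1-\delta$. Two derivatives therefore cost at most $O(n^2)$. The polynomial factor $n^2(n+1)^k$ is absorbed by $e^{-n\eta}$, giving \eqref{eq:R-C2}.

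\textbf{Local edge-dominance separation.} At $\pc$, every non-single-edge graph has rate at least $\Kn>\Jn=I_1(\pc)$, by (b)–(d) and \Cref{lem:gap}. By continuity of $p\mapsto\mathcal I_p(G)-I_1(p)$ and finiteness of the graph family, the inequality $\mathcal I_p(G)\ge I_1(p)+\gamma$ holds on a shrunken $U$ with, for example, $\gamma=(\Kn-\Jn)/2$. The derivative bounds follow from the same method-of-types and termwise-differentiation argument, now with the pointwise exponent $I_1(p)+\gamma$ and polynomial factors $C_jn^{C_j}$.

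\textbf{Main obstacle.} The hardest step is the strictness in (d). It relies on verifying that the wedge optimum places its crossing levels strictly inside $(p,q)$, so that each added leaf constraint is genuinely violated. I would prove this directly from the first-order (KKT) conditions of the wedge problem, whose solution has the explicit geometric-mean form that underlies \eqref{eq:IH}.
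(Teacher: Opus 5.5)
Your proposal is correct and follows essentially the paper's proof. The shared steps are additivity over components for (a)--(c); strictness in (d) from the unique wedge minimizer, whose common level lies strictly inside $(p,q)$, together with an attached vertex that cannot sit at its natural mean; method-of-types bounds with $O(n)$ score factors per derivative; and continuity for the local gap. Your detour through connected three-edge subgraphs is only a repackaging of the paper's ``wedge plus one leaving edge.''

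The one step you leave implicit is the paper's check that disconnected edge sets with at least three edges also have center rate above $\Kn$. This follows at once from your additivity: such a set either has a component with at least three edges, or has at least two components and hence rate at least $2\Jn>\Kn$.
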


\Cref{app:graph-rate-details} gives the proof. Here and below, $U$ denotes a fixed closed interval centered at $\pc$ and contained in $(0,1-\delta)$, chosen small enough that the local expansions and rate gaps in \Cref{lem:one-edge-rate,lem:wedge,lem:graph-gap} hold on the same interval. The first part isolates the wedge scale in the central derivative calculation below. The second part supplies the derivative control used to localize the maximizer in \Cref{sec:proof}; differentiating the finite likelihood sums adds only polynomial factors, which the exponential gap absorbs.

Differentiating \eqref{eq:exact-graph-decomp} at $\pc$ gives the two quantities in the optimizer quotient. Exact one-edge symmetry gives $a_n'(\pc)=0$ and hence $(a_n^2)'(\pc)=0$. By \Cref{lem:wedge,lem:graph-gap} and \eqref{eq:wedge-count-difference},
\begin{align}
 Q_n'(\pc)
 &=-N_HH_n'(\pc)-N_LL_n'(\pc)+o(e^{-n\Kn})
 \nonumber\\
 &=\frac{t(k-t)(k-2t)}2\kappa_\delta A_{2,\delta}e^{-n\Kn}\{1+o(1)\}.
 \label{eq:Q-prime-center}
\end{align}
The factor $k-2t$ is the multiplicity difference in \eqref{eq:wedge-count-difference} after removing the common factor $t(k-t)/2$. Since $a_n'(\pc)=0$,
\[
 (a_n^2)''(\pc)=2a_n(\pc)a_n''(\pc)=O(e^{-2n\Jn}).
\]
Together with \eqref{eq:wedge-second-bound} and \eqref{eq:R-C2}, the single-edge expansion gives
\[
 Q_n''(\pc)
 =t(k-t)a_n''(\pc)+O(ne^{-n\Kn})
  +O(e^{-2n\Jn})+O(e^{-n(\Kn+\eta)}).
\]
Since $0<\Jn<\Kn$, all three error terms are $o(n^{1/2}e^{-n\Jn})$. Therefore
\begin{equation}
 Q_n''(\pc)
 =-t(k-t)I_1''(\pc)A_{1,\delta}n^{1/2}e^{-n\Jn}\{1+o(1)\}.
 \label{eq:Q-second-center}
\end{equation}
Thus the edge term determines the denominator in \eqref{eq:intro-quotient}, while the oriented-wedge imbalance determines its numerator. The next section localizes the global optimizer to the central region and then applies this quotient.

\section{Proof of the main theorem}\label{sec:proof}

The derivative asymptotics at the end of \Cref{sec:edge-wedge} determine a stationary point near $\pc$. To use them for the global least-favorable location, we first localize the optimizer and prove uniqueness. The window $|p-\pc|\le n^{-1}$ is an intermediate scale: it is much wider than the final exponentially small displacement, but narrow enough for the edge curvature to be uniform. At its boundary, $n|I_1'(p)|$ is already of constant order, so the edge derivative has the required sign uniformly on each side and dominates the higher-rate graph terms outside the window.

\begin{proof}[Proof of \Cref{thm:global-top-t}]
The uniform estimates used below are established in \Cref{app:localization-details}. The one-edge rate $I_1$ has its unique minimum at $\pc$. The away-from-center upper bound \eqref{eq:Q-away-upper} and the center lower bound \eqref{eq:Q-center-lower} therefore put every global maximizer in a fixed neighborhood $U$ of $\pc$ for all large $n$.

On $U$, write $Q_n=t(k-t)a_n+B_n$, where $B_n$ is the sum of all non-single-edge terms in \eqref{eq:exact-graph-decomp}. The uniform single-edge expansion in \eqref{eq:a-uniform-leading} gives
\[
 \operatorname{sgn}a_n'(p)=-\operatorname{sgn}(p-\pc),
 \qquad n^{-1}\le |p-\pc|\le\operatorname{diam}(U),
\]
and \Cref{lem:graph-gap} makes $B_n'$ uniformly smaller than $a_n'$ on this region. This gives the outer derivative signs in \eqref{eq:Q-prime-outer-sign}.

On $|p-\pc|\le n^{-1}$, two differentiations of the single-edge expansion, together with the bounds for the non-single-edge terms, give
\begin{equation}
 Q_n''(p)
 =-t(k-t)I_1''(\pc)A_{1,\delta}n^{1/2}e^{-n\Jn}\{1+o(1)\}
 \label{eq:uniform-central-curvature}
\end{equation}
uniformly: the wedge bound \eqref{eq:wedge-second-bound}, the disjoint-pair estimate $(a_n^2)''=O(e^{-2n\Jn})$, and the remainder bound \eqref{eq:R-C2} show that $B_n''$ is of smaller order. Hence $Q_n$ is strictly concave on the central interval. Together with the outer derivative signs, this proves that $Q_n$ has a unique global maximizer $p_{n,k,t}^*$, which lies within $n^{-1}$ of $\pc$.

If $k=2t$, exact reflection makes $Q_n$ symmetric about $\pc$; eventual uniqueness therefore forces its maximizer to be $p_{n,k,t}^*=\pc$, and \eqref{eq:beyond-all-orders} is immediate.

Assume $k\ne2t$. For all sufficiently large $n$, the central interval lies in $(0,1-\delta)$, so $p_{n,k,t}^*$ is interior and $Q_n'(p_{n,k,t}^*)=0$. The mean-value theorem gives a point $\xi_n$ between $\pc$ and $p_{n,k,t}^*$ such that
\begin{equation}
 p_{n,k,t}^*-\pc=-\frac{Q_n'(\pc)}{Q_n''(\xi_n)}.
 \label{eq:MVT-shift}
\end{equation}
Equations \eqref{eq:Q-prime-center}, \eqref{eq:uniform-central-curvature}, and \eqref{eq:MVT-shift} first give $|p_{n,k,t}^*-\pc|=O(n^{-1/2}e^{-n\Gam})$. The uniform curvature estimate then gives $Q_n''(\xi_n)/Q_n''(\pc)\to1$. Substituting \eqref{eq:Q-prime-center} and \eqref{eq:Q-second-center} into \eqref{eq:MVT-shift} yields
\[
 p_{n,k,t}^*-\pc
 =(k-2t)\frac{\kappa_\delta A_{2,\delta}}
 {2I_1''(\pc)A_{1,\delta}}
 n^{-1/2}e^{-n\Gam}\{1+o(1)\}.
\]
Since $\delta=(1-\taud^3)/(1+\taud^3)$ and $1+\taud^3=(1+\taud)(1-\taud+\taud^2)$, direct substitution gives
\[
 \frac{\kappa_\delta A_{2,\delta}}
 {2I_1''(\pc)A_{1,\delta}}
 =\frac{\taud^2}
 {3\sqrt{3\pi}(1+\taud)^2(1-\taud+\taud^2)}
 =\Cd.
\]
This proves \eqref{eq:select-t-main}. Equation \eqref{eq:beyond-all-orders} follows from $\Gam>0$.
\end{proof}

The location expansion also quantifies the smaller-order error from evaluating the PCS at $\pc$ rather than at the exact least-favorable location. To leading order, this loss is one half of the PCS curvature at the center times $(p_{n,k,t}^*-\pc)^2$.

\begin{corollary}[Worst PCS and the cost of using the symmetric center]\label{cor:pcs-loss}
Fix $k\ge2$, $1\le t<k$, and $\delta\in(0,1)$. Then, as $n\to\infty$,
\begin{equation}
 1-P_{n,k,t,\delta}(p_{n,k,t}^*)
 =t(k-t)A_{1,\delta}n^{-1/2}e^{-n\Jn}\{1+o(1)\}.
 \label{eq:worst-pcs-leading}
\end{equation}
If $k\ne2t$, then
\begin{align*}
 &P_{n,k,t,\delta}(\pc)
   -P_{n,k,t,\delta}(p_{n,k,t}^*)
 \\[-1mm]
 &\qquad=\frac{t(k-t)(k-2t)^2\kappa_\delta^2A_{2,\delta}^2}
 {8I_1''(\pc)A_{1,\delta}}
 n^{-1/2}e^{-n(2\Kn-\Jn)}\{1+o(1)\}.
\end{align*}
Thus the difference between the centered PCS and the worst-case PCS is on the smaller exponential scale $2\Kn-\Jn$.
\end{corollary}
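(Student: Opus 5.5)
The plan for the first assertion is to compare $Q_n(p^*_{n,k,t})$ with $Q_n(\pc)$, where $p^*=p^*_{n,k,t}$ throughout. By \Cref{thm:global-top-t}, $|p^*-\pc|=O(n^{-1/2}e^{-n\Gam})$. The quadratic estimate in the second step below gives $Q_n(p^*)-Q_n(\pc)=O(n^{-1/2}e^{-n(2\Kn-\Jn)})$, and this is $o(a_n(\pc))$. It therefore suffices to find $Q_n(\pc)$. The Bonferroni sandwich \eqref{eq:bonferroni} at $\pc$ gives
\[
 Q_n(\pc)=t(k-t)a_n(\pc)+O(H_n(\pc)+L_n(\pc)+a_n(\pc)^2).
\]
By \Cref{lem:wedge} and \Cref{lem:gap} ($\Kn>\Jn$), the wedge terms are $O(n^{-1}e^{-n\Kn})=o(n^{-1/2}e^{-n\Jn})$. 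The disjoint-pair term is $O(n^{-1}e^{-2n\Jn})$, which is also negligible. \Cref{lem:one-edge-rate} then gives \eqref{eq:worst-pcs-leading}.

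For the loss, assume $k\ne2t$. Since $P_n(\pc)-P_n(p^*)=Q_n(p^*)-Q_n(\pc)$, I would Taylor expand $Q_n$ about $p^*$ rather than about $\pc$, because $Q_n'(p^*)=0$ when the maximizer is interior (which holds for large $n$ by the theorem). This gives
\[
 Q_n(\pc)-Q_n(p^*)=\tfrac12 Q_n''(\zeta_n)(\pc-p^*)^2
\]
for some $\zeta_n$ between $\pc$ and $p^*$. Since $|\zeta_n-\pc|\le n^{-1}$ eventually, the uniform curvature estimate \eqref{eq:uniform-central-curvature} gives
\[
 Q_n''(\zeta_n)=-t(k-t)I_1''(\pc)A_{1,\delta}n^{1/2}e^{-n\Jn}\{1+o(1)\}.
\]
Hence
\[
 P_n(\pc)-P_n(p^*)=\tfrac12 t(k-t)I_1''(\pc)A_{1,\delta}n^{1/2}e^{-n\Jn}(p^*-\pc)^2\{1+o(1)\}.
\]

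Next I substitute the unsimplified form of the displacement from the proof of the theorem:
\[
 p^*-\pc=(k-2t)\frac{\kappa_\delta A_{2,\delta}}{2I_1''(\pc)A_{1,\delta}}n^{-1/2}e^{-n\Gam}\{1+o(1)\}.
\]
Squaring gives $n^{-1}e^{-2n(\Kn-\Jn)}$ times the squared constant. Multiplying by $n^{1/2}e^{-n\Jn}$ produces
\[
 n^{-1/2}e^{-n(2\Kn-\Jn)}\,\frac{t(k-t)(k-2t)^2\kappa_\delta^2A_{2,\delta}^2}{8I_1''(\pc)A_{1,\delta}},
\]
which is the stated expression. The final remark follows because $2\Kn-\Jn>\Jn$ by \Cref{lem:gap}.

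The only delicate point is the choice of expansion point. Expanding about $\pc$ would bring in the nonvanishing linear term $Q_n'(\pc)(p^*-\pc)$, and one would then need a cancellation with the quadratic term: the linear term equals minus twice the quadratic one, so the net is $-\tfrac12$ of the curvature term. That route needs error control on the $o(1)$ in $Q_n'(\pc)$ relative to this cancellation. Expanding about the stationary point $p^*$ avoids the issue entirely, needing only the uniform curvature on the $n^{-1}$ window, which the proof of \Cref{thm:global-top-t} has already established.
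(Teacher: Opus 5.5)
Your proof is correct. It uses the same ingredients as the paper: the displacement asymptotics from \Cref{thm:global-top-t}, the uniform curvature \eqref{eq:uniform-central-curvature} on the $n^{-1}$ window, and edge dominance. There are two tactical differences.

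\textbf{First assertion.} The paper evaluates $Q_n$ directly at $p^*$. With $h_n=p^*-\pc$ and $I_1'(\pc)=0$, one has $n\{I_1(p^*)-\Jn\}=O(nh_n^2)=o(1)$. The uniform expansion \eqref{eq:a-uniform-leading} and the local edge-dominance gap then give $Q_n(p^*)=t(k-t)a_n(p^*)\{1+o(1)\}$ with the central constant. You instead compute $Q_n(\pc)$ from the Bonferroni sandwich and transfer it to $p^*$ with the quadratic bound. That bound holds for any $k,t$, and is trivially zero when $k=2t$, since then $p^*=\pc$ eventually. An even shorter route exists:
\[
Q_n(\pc)\le Q_n(p^*)\le t(k-t)a_n(p^*)\le t(k-t)a_n(\pc),
\]
by \eqref{eq:bonferroni} and \Cref{cor:two-population-location}. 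This needs no information about $p^*$ at all.

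\textbf{Second assertion.} The paper expands about $\pc$ and obtains $-Q_n'(\pc)^2/\{2Q_n''(\pc)\}\{1+o(1)\}$. This exhibits the loss directly as the squared wedge slope over the edge curvature. You expand about the stationary point and substitute the displacement, which is a line shorter and gives the same constant.

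\textbf{On your closing remark.} You overstate the delicacy of the paper's route. The cancellation there is only partial ($-1+\tfrac12=-\tfrac12$), so relative $o(1)$ errors in each term suffice. Moreover, writing $Q_n'(\pc)=-h_nQ_n''(\xi_n)$ exactly by the mean-value theorem turns that expansion into $h_n^2\{-Q_n''(\xi_n)+\tfrac12Q_n''(\zeta_n)\}$. That form has no cancellation issue at all.
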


\begin{proof}
By \Cref{thm:global-top-t}, $|p_{n,k,t}^*-\pc|=O(n^{-1/2}e^{-n\Gam})$; in the balanced case this difference is eventually zero. Since $I_1'(\pc)=0$ and $I_1(\pc)=\Jn$,
\[
 n\{I_1(p_{n,k,t}^*)-\Jn\}
 =O\!\left(n|p_{n,k,t}^*-\pc|^2\right)
 =O(e^{-2n\Gam})=o(1).
\]
The uniform single-edge expansion and the local graph-rate gap therefore give
\[
 Q_n(p_{n,k,t}^*)
 =t(k-t)a_n(p_{n,k,t}^*)\{1+o(1)\}
 =t(k-t)A_{1,\delta}n^{-1/2}e^{-n\Jn}\{1+o(1)\},
\]
which proves \eqref{eq:worst-pcs-leading}.

Suppose $k\ne2t$ and put $h_n=p_{n,k,t}^*-\pc$. The point $p_{n,k,t}^*$ is interior for all large $n$, so $Q_n'(p_{n,k,t}^*)=0$. The mean-value theorem and \eqref{eq:uniform-central-curvature} yield
\[
 h_n
 =-\frac{Q_n'(\pc)}{Q_n''(\pc)}\{1+o(1)\}.
\]
For some $\zeta_n$ between $\pc$ and $p_{n,k,t}^*$,
\[
 Q_n(p_{n,k,t}^*)-Q_n(\pc)
 =Q_n'(\pc)h_n+\frac12Q_n''(\zeta_n)h_n^2.
\]
The uniform curvature estimate gives $Q_n''(\zeta_n)/Q_n''(\pc)\to1$. Substituting the preceding expression for $h_n$ therefore gives
\[
 Q_n(p_{n,k,t}^*)-Q_n(\pc)
 =-\frac{Q_n'(\pc)^2}{2Q_n''(\pc)}\{1+o(1)\}.
\]
Substituting \eqref{eq:Q-prime-center} and \eqref{eq:Q-second-center} gives the second assertion.
\end{proof}

\section{Numerical illustration}\label{sec:numerical}

We compare finite-sample minimizing locations with their asymptotic approximations at $\delta=0.4$. The finite binomial-sum expression for $P_{n,k,t,\delta}$ and its analytic derivative are evaluated in 80-decimal arithmetic. We bracket derivative sign changes, refine them by bisection, and compare the resulting locations with the endpoints. For $k=4$, write the leading approximation from \Cref{thm:global-top-t} as
\[
 \widetilde p_{n,4,t}
 =\pc+(4-2t)\Cd n^{-1/2}e^{-n\Gam}.
\]
\Cref{fig:numerical-illustration}(a) compares the minimizing locations with $\widetilde p_{n,4,t}$ for $t=1,2,3$. The $t=1$ and $t=3$ curves are exact reflections about $\pc=0.3$; at $n=1$, their minimizers are the right and left endpoints, respectively. When $n=1$ and $t=2$, both endpoints minimize; the computed location is at the center in the remaining displayed balanced cases. The three curves then draw rapidly toward $\pc$. \Cref{fig:numerical-illustration}(b) places the $t=1$ displacement on a logarithmic axis and extends the comparison to $n=100$.

\begin{figure}[!htbp]
\centering
\includegraphics[width=\textwidth]{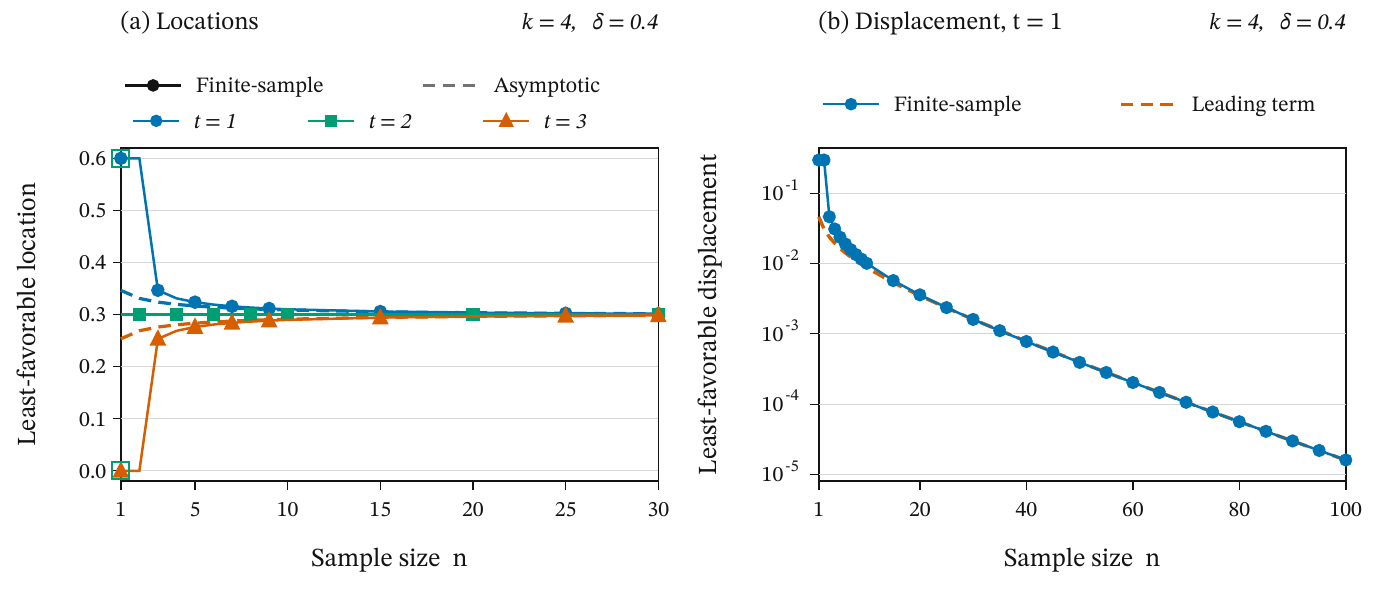}
\caption{Least-favorable locations and their leading approximations for
$k=4$ and $\delta=0.4$. (a) Locations for $t=1,2,3$ and
$n=1,\ldots,10,15,20,25,30$; the open squares at $n=1$ mark the two
minimizers for $t=2$. (b) Finite-sample displacement $p_{n,4,1}^*-\pc$ and the
leading term $2\Cd n^{-1/2}e^{-n\Gam}$ for
$n=1,\ldots,10,15,20,\ldots,100$; the vertical axis is logarithmic.}
\label{fig:numerical-illustration}
\end{figure}

\FloatBarrier

For comparisons across $(k,t)$ configurations, define the scale-free ratio
\begin{equation*}
 \rho_n(k,t,\delta)
 =\frac{p_{n,k,t}^*-\pc}
 {(k-2t)\Cd n^{-1/2}e^{-n\Gam}},
 \qquad k\ne2t.
\end{equation*}
By \Cref{thm:global-top-t}, $\rho_n(k,t,\delta)\to1$.

\Cref{tab:scaled-ratios} compares three configurations at $\delta=0.4$ over $n=20,40,80$. The $(6,4)$ and $(6,2)$ columns display the exact reflection, while $(6,2)$ and $(4,1)$ have the same value $k-2t=2$ and show the remaining finite-sample dependence on $(k,t)$.

\begin{table}[ht]
\centering
\small
\caption{Displacements and scaled ratios at $\delta=0.4$. The center is
$\pc=0.3$.}
\label{tab:scaled-ratios}
\begin{tabular}{c@{\quad}cc@{\quad}cc@{\quad}cc}
\toprule
& \multicolumn{2}{c}{$(k,t)=(6,4)$}
& \multicolumn{2}{c}{$(k,t)=(6,2)$}
& \multicolumn{2}{c}{$(k,t)=(4,1)$}\\
\cmidrule(lr){2-3}\cmidrule(lr){4-5}\cmidrule(lr){6-7}
$n$ & $p_{n,k,t}^*-\pc$ & $\rho_n$
    & $p_{n,k,t}^*-\pc$ & $\rho_n$
    & $p_{n,k,t}^*-\pc$ & $\rho_n$\\
\midrule
20 & $-2.939886\times10^{-3}$ & 0.840530
   & $ 2.939886\times10^{-3}$ & 0.840530
   & $ 3.598146\times10^{-3}$ & 1.028730\\
40 & $-6.977117\times10^{-4}$ & 0.888502
   & $ 6.977117\times10^{-4}$ & 0.888502
   & $ 7.787118\times10^{-4}$ & 0.991651\\
80 & $-5.424220\times10^{-5}$ & 0.969000
   & $ 5.424220\times10^{-5}$ & 0.969000
   & $ 5.602576\times10^{-5}$ & 1.000862\\
\bottomrule
\end{tabular}
\end{table}

The balanced calculations reveal a separate finite-sample pattern. \Cref{tab:balanced-locations} records whether the numerical search returns only the center as a minimizing location when $k=2t$. This occurs at every displayed even sample size, whereas off-center minima appear successively across the odd sample sizes as $t$ increases.

\begin{table}[ht]
\centering
\small
\caption{Finite-sample centering in balanced cases at $\delta=0.4$. Here
$k=2t$ and $\pc=0.3$. Each entry applies to every integer $t$ in the
indicated row. A checkmark means that the search returns $\pc$ as its only
minimizing location. At
$n=1$, the objective is flat for $t=1$, while for $t\ge2$ its two endpoints
minimize. Every other dash denotes a reflected pair of off-center minima.}
\label{tab:balanced-locations}
\setlength{\tabcolsep}{7pt}
\begin{tabular}{c@{\qquad}*{10}{c}}
\toprule
& \multicolumn{10}{c}{sample size $n$}\\
\cmidrule(lr){2-11}
$t$ & 1 & 2 & 3 & 4 & 5 & 6 & 7 & 8 & 9 & 10\\
\midrule
$1\le t\le6$ & -- & $\checkmark$ & $\checkmark$ & $\checkmark$ & $\checkmark$ & $\checkmark$ & $\checkmark$ & $\checkmark$ & $\checkmark$ & $\checkmark$\\
$7\le t\le10$ & -- & $\checkmark$ & -- & $\checkmark$ & $\checkmark$ & $\checkmark$ & $\checkmark$ & $\checkmark$ & $\checkmark$ & $\checkmark$\\
$11\le t\le15$ & -- & $\checkmark$ & -- & $\checkmark$ & -- & $\checkmark$ & $\checkmark$ & $\checkmark$ & $\checkmark$ & $\checkmark$\\
$16\le t\le20$ & -- & $\checkmark$ & -- & $\checkmark$ & -- & $\checkmark$ & -- & $\checkmark$ & $\checkmark$ & $\checkmark$\\
\bottomrule
\end{tabular}
\end{table}

\FloatBarrier

\section{Conclusion}\label{sec:conclusion}

The symmetric center $\pc=(1-\delta)/2$ is the natural candidate for the least-favorable location, as suggested by the Gaussian approximation. For general binomial top-$t$ selection, \Cref{thm:global-top-t} shows that, in the balanced case $k=2t$, this center is the exact least-favorable location for all sufficiently large but finite $n$; in the unbalanced case $k\ne2t$, the exact location differs from it by an explicitly determined, exponentially small amount whose direction is given by the sign of $k-2t$. Even in the latter case, \Cref{cor:pcs-loss} shows that the PCS at $\pc$ differs from the worst-case PCS on the still smaller exponential scale $2\Kn-\Jn$.

The comparison graph explains why the exact location lies beyond the leading PCS approximation. The edge term gives the leading PCS error and central curvature, but exact symmetry makes its central slope zero. The first nonzero slope comes from the multiplicity imbalance between the two wedge orientations. Thus the center remains correct to every algebraic order, even though the exact location is off center in unbalanced problems.

In fact, as \Cref{tab:balanced-locations} suggests, we further conjecture that, in every balanced problem, the symmetric center is the unique least-favorable location for every even sample size $n\ge2$. We leave this conjecture for future work.

\appendix

\section{Finite-sample reduction and single-edge centering}\label[appendix]{app:finite-sample}

This appendix proves the full-space reduction and the exact centering of a single low--high comparison.

\subsection{Proof of the full-space reduction}\label[appendix]{app:full-reduction-proof}

\begin{proof}[Proof of \Cref{prop:full-reduction}]
Let $L$ and $H$ be the index sets of the lower $k-t$ and upper $t$ populations, respectively, and put $p=\theta_{[k-t+1]}-\delta$ and $q=p+\delta$. The gap condition gives $p\in[0,1-\delta]$. From $\theta_{[k-t+1]}-\theta_{[k-t]}\ge\delta$ and the definition of $p$,
\[
 \theta_i\le p\quad(i\in L),
 \qquad
 \theta_j\ge q\quad(j\in H).
\]
For $i=1,\ldots,k$ and $m=1,\ldots,n$, take independent $W_{im}\sim\mathrm{Unif}(0,1)$ and construct the monotone coupling
\[
 B_{im}(a)=\one\{W_{im}\le a\},
 \qquad
 S_i(a)=\sum_{m=1}^nB_{im}(a).
\]
Then $a\le b$ implies $S_i(a)\le S_i(b)$ for every realization. Using the same tie-breaking variables $U_i$, define the original scores by
\[
 \widetilde S_r^{\boldsymbol\theta}=(S_r(\theta_r),U_r)
 \qquad(1\le r\le k)
\]
and the reduced scores by
\[
 \widetilde S_i^p=(S_i(p),U_i)\quad(i\in L),
 \qquad
 \widetilde S_j^p=(S_j(q),U_j)\quad(j\in H).
\]
For every realization,
\[
 \widetilde S_i^{\boldsymbol\theta}\le\widetilde S_i^p\quad(i\in L),
 \qquad
 \widetilde S_j^{\boldsymbol\theta}\ge\widetilde S_j^p\quad(j\in H).
\]
Let
\begin{equation*}
 C_{\boldsymbol\theta}=\left\{\max_{i\in L}\widetilde S_i^{\boldsymbol\theta}<\min_{j\in H}\widetilde S_j^{\boldsymbol\theta}\right\},
 \qquad
 C_p=\left\{\max_{i\in L}\widetilde S_i^p<\min_{j\in H}\widetilde S_j^p\right\}.
\end{equation*}
Under this coupling, the preceding inequalities give
\[
 C_p\subseteq C_{\boldsymbol\theta},
\]
and hence \eqref{eq:pointwise-reduction}.

For strictness, let $p\in(0,1-\delta)$, so $0<p<q<1$. If $i\in L$ satisfies $\theta_i<p$, let $A_i^-$ be the event on which
\[
 W_{im}\in(\theta_i,p]\quad(1\le m\le n),
 \qquad
 W_{\ell m}>p\quad(\ell\in L\setminus\{i\},\ 1\le m\le n),
\]
\[
 W_{jm}\le q\quad(j\in H,\ 1\le m\le n),
 \qquad
 U_i>\min_{j\in H}U_j.
\]
Independence gives
\[
 \Pp(A_i^-)
 =(p-\theta_i)^n(1-p)^{n(k-t-1)}q^{nt}\frac{t}{t+1}>0.
\]
On $A_i^-$, all high counts are $n$ in both configurations, whereas the $i$th low count rises from $0$ to $n$. The auxiliary ordering makes the original selection correct and the reduced selection incorrect. Hence $A_i^-\subseteq C_{\boldsymbol\theta}\setminus C_p$.

If instead $j\in H$ satisfies $\theta_j>q$, let $A_j^+$ be the event on which
\[
 W_{jm}\in(q,\theta_j]\quad(1\le m\le n),
 \qquad
 W_{im}>p\quad(i\in L,\ 1\le m\le n),
\]
\[
 W_{\ell m}\le q\quad(\ell\in H\setminus\{j\},\ 1\le m\le n),
 \qquad
 \max_{i\in L}U_i>U_j.
\]
Then
\[
 \Pp(A_j^+)
 =(\theta_j-q)^n(1-p)^{n(k-t)}q^{n(t-1)}
   \frac{k-t}{k-t+1}>0.
\]
On $A_j^+$, the $j$th high count falls from $n$ to $0$, while every low count remains $0$ and every other high count remains $n$. The auxiliary ordering again makes the original selection correct and the reduced one incorrect, so $A_j^+\subseteq C_{\boldsymbol\theta}\setminus C_p$. Thus, in either case,
\[
 \Pp(C_{\boldsymbol\theta}\setminus C_p)>0,
\]
which makes \eqref{eq:pointwise-reduction} strict.

If neither strict coordinate inequality occurs, then every low parameter is $p$ and every high parameter is $q$; the two configurations coincide. This proves the stated equality characterization.

Finally, every two-level configuration with $k-t$ entries $p$ and $t$ entries $p+\delta$ belongs to $\ThetaPZ$. Taking infima in \eqref{eq:pointwise-reduction} gives one inequality in \eqref{eq:full-reduction}, and this inclusion gives the reverse inequality. Since the scalar objective is continuous, the infimum on the right is a minimum.
\end{proof}

\subsection{Exact centering of a single edge}\label[appendix]{app:one-edge-centering}

For $k=2$ and $t=1$, incorrect selection is exactly one low--high misranking, so this calculation proves the scalar assertion in \Cref{cor:two-population-location}. Pair one observation $B_p\sim\Bern(p)$ with an independent observation $B_{p+\delta}\sim\Bern(p+\delta)$, and put $Z=B_p-B_{p+\delta}$. With
\[
 \alpha=p(1-p-\delta)=\pc^2-(p-\pc)^2,
\]
its three masses are
\[
 \Pp(Z=1)=\alpha,\qquad
 \Pp(Z=-1)=\alpha+\delta,\qquad
 \Pp(Z=0)=1-2\alpha-\delta.
\]
Let $c_{m,s}(\alpha)$ be the probability that the sum of $m$ independent copies of $Z$ equals $s$. The Laurent generating function is
\[
 \sum_s c_{m,s}(\alpha)z^s
 =\{\alpha z+(1-2\alpha-\delta)+(\alpha+\delta)z^{-1}\}^m.
\]
Writing the tie-weighted upper-tail probability as
\[
 a_n(p)=\sum_s
 \left\{\one\{s\ge1\}+\frac12\one\{s=0\}\right\}
 c_{n,s}(\alpha),
\]
differentiation of the generating function gives
\[
 \frac{d a_n}{d\alpha}
 =\frac n2\{c_{n-1,-1}(\alpha)-c_{n-1,1}(\alpha)\}.
\]
Interchanging the $+1$ and $-1$ steps in every path with net displacement $1$ yields
\[
 c_{m,-1}(\alpha)
 =\frac{\alpha+\delta}{\alpha}c_{m,1}(\alpha).
\]
Hence, for $0<\alpha\le\pc^2$,
\[
 \frac{d a_n}{d\alpha}
 =\frac{n\delta}{2\alpha}c_{n-1,1}(\alpha)>0
 \qquad(n\ge2).
\]
Continuity covers $\alpha=0$. Since $\alpha$ is uniquely maximized at $p=\pc$, so is $a_n(p)$. When $n=1$, $c_{0,1}=0$ and $a_1$ is constant.

\section{Weighted lattice asymptotics and motif calculations}\label[appendix]{app:motif-calculations}

This appendix supplies the two expansions used in \Cref{sec:edge-wedge}: the single-edge curvature and the wedge slope. For each motif, a KL minimization identifies the constrained empirical configuration and its rate, the equivalent saddlepoint constructs the exponential tilt, and the lattice-corner expansion gives the prefactor and parameter derivatives. The edge uses the stationary-rate case with $d=1$; the wedge uses the nonstationary-rate case with $d=2$.

The exponential-tilting and local-limit argument belongs to the strong large-deviation theory developed by Bahadur and Ranga Rao \cite{BahadurRao1960}. Chaganty and Sethuraman connect local limit theorems to strong large-deviation asymptotics for scalar and vector sequences, respectively, in \cite{ChagantySethuraman1993,ChagantySethuraman1996}. Barbe and Broniatowski \cite{BarbeBroniatowski2005} give a multidimensional sharp large-deviation treatment for sums of random vectors, including the lattice case. The form needed here is parameter-uniform, lattice-valued, and valid through two derivatives; the tie-breaking weights enter only the prefactor.

For $a\in(0,1)$ and $x\in[0,1]$, write
\begin{equation}
 \KL(x\|a)=x\log\frac xa+(1-x)\log\frac{1-x}{1-a},
 \label{eq:KL}
\end{equation}
with the standard endpoint convention.

Throughout this appendix, $q=p+\delta$ and
\[
 \ell:=\pc,\qquad
 h:=1-\pc=\frac{1+\delta}{2},\qquad
 \taud=\left(\frac{1-\delta}{1+\delta}\right)^{1/3}.
\]

For a compact interval $U\subset\Rr$ and an integer $r\ge0$, write $f_n=O_{C^r(U)}(\varepsilon_n)$ when $f_n$ is $r$ times continuously differentiable and
\[
 \max_{0\le j\le r}\sup_{p\in U}|f_n^{(j)}(p)|
 \le C\varepsilon_n.
\]
The notation $o_{C^r(U)}(1)$ has the analogous meaning. The applications below use $d\in\{1,2\}$, $r=2$, and the nonnegative weights specified in the next two subsections; their corresponding weighted geometric sums $W$ are positive.

\begin{lemma}[Parametric weighted lattice-corner expansion]\label{lem:corner}
Let $U$ be a compact interval. For each $p\in U$, let $Z_p$ be a random vector supported by a fixed finite subset of $\Zz^d$, with masses that are positive and $C^{r+2}$ in $p$. Assume that the additive group generated by differences of support points is $\Zz^d$. Put
\[
 M_p(\lambda)=\Ee e^{\langle\lambda,Z_p\rangle}.
\]
Suppose there is a $C^{r+2}$ map $\lambda:U\to(0,\infty)^d$ satisfying
\[
 \nabla_\lambda\log M_p(\lambda(p))=0.
\]
Let
\[
 I(p)=-\log M_p(\lambda(p)),
 \qquad
 \Sigma(p)=\nabla_\lambda^2\log M_p(\lambda(p)).
\]
Let $w:\Zz_+^d\to\Rr$ be bounded and define
\[
 W(p)=\sum_{z\in\Zz_+^d}
 w(z)e^{-\langle\lambda(p),z\rangle}.
\]
For independent copies $Z_{p,1},\ldots,Z_{p,n}$, put $S_{n,p}=\sum_{i=1}^nZ_{p,i}$ and
\[
 \mathcal T_n(p)=\sum_{z\in\Zz_+^d}w(z)\Pp(S_{n,p}=z).
\]
Then, for $\varepsilon_n=n^{-1/2}(\log n)^{3r+6}$,
\begin{equation}
 \mathcal T_n(p)
 =\frac{e^{-nI(p)}n^{-d/2}}
 {(2\pi)^{d/2}\sqrt{\det\Sigma(p)}}
 \left[W(p)+O_{C^r(U)}(\varepsilon_n)\right].
 \label{eq:corner-expansion}
\end{equation}
If $r\ge1$ and $I'(p_0)\ne0$, then
\begin{equation}
 \mathcal T_n'(p_0)
 =n^{1-d/2}e^{-nI(p_0)}
 \left\{-I'(p_0)
 \frac{W(p_0)}{(2\pi)^{d/2}\sqrt{\det\Sigma(p_0)}}+o(1)\right\}.
 \label{eq:corner-first-derivative}
\end{equation}
If in addition $W(p_0)\ne0$, this may equivalently be written with a relative factor $\{1+o(1)\}$. If $r\ge2$, $I'(p_0)=0$, $I''(p_0)>0$, and $W(p_0)>0$, then
\begin{equation}
 \mathcal T_n''(p_0)
 =-I''(p_0)
 \frac{W(p_0)}{(2\pi)^{d/2}\sqrt{\det\Sigma(p_0)}}
 n^{1-d/2}e^{-nI(p_0)}\{1+o(1)\}.
 \label{eq:corner-second-derivative}
\end{equation}
\end{lemma}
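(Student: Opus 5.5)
The approach is exponential tilting. For each $p\in U$, define the tilted law $\Pp_{\lambda(p)}(Z_p=z)=e^{\langle\lambda(p),z\rangle}\Pp(Z_p=z)/M_p(\lambda(p))$. Since $\nabla_\lambda\log M_p(\lambda(p))=0$, the tilted law has mean zero and covariance $\Sigma(p)$. Independence gives the exact identity
\[
 \Pp(S_{n,p}=z)=M_p(\lambda(p))^n e^{-\langle\lambda(p),z\rangle}\,\Pp_{\lambda(p)}(S_{n,p}=z)
 =e^{-nI(p)}e^{-\langle\lambda(p),z\rangle}\,\Pp_{\lambda(p)}(S_{n,p}=z).
\]
Hence
\[
 \mathcal T_n(p)=e^{-nI(p)}\sum_{z\in\Zz_+^d}w(z)e^{-\langle\lambda(p),z\rangle}\,\Pp_{\lambda(p)}(S_{n,p}=z).
\]
Because $\lambda(p)$ has strictly positive coordinates and $U$ is compact, the geometric weights decay uniformly like $e^{-c|z|}$. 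So the sum is effectively over $|z|\le C\log n$.

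On that range I will apply a parameter-uniform local limit theorem with Edgeworth correction to the mean-zero, nondegenerate, strongly aperiodic lattice law. The lattice hypothesis is exactly what gives strong aperiodicity. The result is
\[
 \Pp_{\lambda(p)}(S_{n,p}=z)=\frac{n^{-d/2}}{(2\pi)^{d/2}\sqrt{\det\Sigma(p)}}\{1+O(|z|n^{-1/2}+n^{-1/2})\}.
\]
Summing against the weights gives $W(p)+O(n^{-1/2})$, and the tail $|z|>C\log n$ is negligible. The masses are $C^{r+2}$ in $p$, and $\lambda$ is $C^{r+2}$ by hypothesis (which the implicit function theorem would also give). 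Therefore the tilted characteristic function $\phi_p(\theta)=\Ee_{\lambda(p)}e^{i\langle\theta,Z_p\rangle}$ and its $p$-derivatives up to order $r$ are uniformly controlled.

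For $C^r$ control I will not differentiate the local limit theorem directly. Instead I will write the tilted probability as a Fourier integral,
\[
 \Pp_{\lambda(p)}(S_{n,p}=z)=(2\pi)^{-d}\int_{[-\pi,\pi]^d}\phi_p(\theta)^n e^{-i\langle\theta,z\rangle}\,d\theta .
\]
I then split the integral into $|\theta|\le n^{-1/2}\log n$ and its complement. On the complement, aperiodicity gives $|\phi_p|\le 1-c$ uniformly, which yields exponentially small contributions even after differentiation. Each $p$-derivative of $\phi_p^n$ produces at most factors $n^j(\partial_p\phi_p)^j\phi_p^{n-j}$. On the inner region, $\partial_p\phi_p(\theta)=O(|\theta|)$ because the tilted mean is zero for every $p$, so each factor $n\,\partial_p\phi_p$ costs only $n|\theta|\le n^{1/2}\log n$. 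Combined with the Gaussian comparison, this gives an error of relative size $n^{-1/2}$ times powers of $\log n$; the same bookkeeping, together with the $|z|\le C\log n$ range, also accounts for the $(\log n)^{3r+6}$ in $\varepsilon_n$. The prefactor $(\det\Sigma)^{-1/2}$ and $W(p)$ are smooth and differentiate harmlessly, so this establishes \eqref{eq:corner-expansion} in $C^r(U)$ for the factor $n^{-d/2}[\cdots]$.

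The derivative formulas follow from the product rule applied to \eqref{eq:corner-expansion}. Write $\mathcal T_n=e^{-nI}n^{-d/2}G_n$ with $G_n=(W+O_{C^r}(\varepsilon_n))/((2\pi)^{d/2}\sqrt{\det\Sigma})$. Then $\mathcal T_n'=e^{-nI}n^{-d/2}(-nI'G_n+G_n')$, and since $G_n'=O(1)$ the $-nI'G_n$ term dominates; this gives \eqref{eq:corner-first-derivative}. For the second derivative,
\[
 \mathcal T_n''=e^{-nI}n^{-d/2}\bigl(n^2I'^2G_n-nI''G_n-2nI'G_n'+G_n''\bigr).
\]
At $p_0$ the condition $I'(p_0)=0$ kills the first and third terms. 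What remains is $-nI''(p_0)G_n(p_0)+O(1)$, and $W(p_0)>0$ makes the relative error $o(1)$, giving \eqref{eq:corner-second-derivative}.

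I expect the main obstacle to be the uniform $C^r$ control of the local limit error, specifically the claim that each $p$-derivative of the tilted lattice probability loses no more than a $\log$ power. The key fact there is that $\partial_p$ of the tilted characteristic function vanishes to first order at $\theta=0$, because the tilted mean is identically zero in $p$.
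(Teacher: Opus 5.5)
Your overall route is the paper's: the exact exponential tilt and change of measure, truncation of the corner at $|z|\lesssim\log n$, Fourier inversion with a uniform aperiodicity bound, the rescaling $\theta=v/\sqrt n$ with a Gaussian comparison, and the product rule for the two derivative formulas. The zeroth-order estimate and the final product-rule step are fine.

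The gap is in the step you flag yourself: the $C^r$ control of the local limit error. There your estimate loses a factor of order $\sqrt n$ per derivative. You claim $\partial_p\phi_p(\theta)=O(|\theta|)$, so that each factor $n\,\partial_p\phi_p$ costs $n|\theta|\le n^{1/2}\log n$. That is a loss of $n^{1/2}\log n$ per derivative, not a power of $\log n$, and it does not support your conclusion of a relative error of $n^{-1/2}$ times logarithms. With only that bound, you could conclude no more than $G_n''=O(n(\log n)^2)$. This can exceed the main term $nI''(p_0)G_n(p_0)$, which is of order $n$, so the curvature formula \eqref{eq:corner-second-derivative} would not follow. Note also that $O(|\theta|)$ comes from normalization alone, since $\partial_p\phi_p(0)=\partial_p 1=0$; it does not use the zero tilted mean at all.

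The zero mean gives one more order. Since $\sum_z\widetilde p_p(z)=1$ and $\sum_z z\,\widetilde p_p(z)=0$ for every $p$, each $\partial_p^a\phi_p$ vanishes at $\theta=0$ together with its $\theta$-gradient. Hence $\partial_p^a\phi_p(\theta)=O(|\theta|^2)$ and $n\,\partial_p^a\phi_p(v/\sqrt n)=O(\|v\|^2)$. Even this only shows that each derivative stays of the order of the main term $n^{-d/2}$.

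To get an $O_{C^r(U)}(\varepsilon_n)$ error, you must also show that the $p$-derivatives of $\phi_p(v/\sqrt n)^n-e^{-v^{\mathsf T}\Sigma(p)v/2}$ are $O(n^{-1/2})$ times a polynomial-times-Gaussian envelope. The paper does this by writing $\log\phi_p(u)=-\frac12u^{\mathsf T}\Sigma(p)u+R_p(u)$ with $\sup_{p\in U}|\partial_p^jR_p(u)|\le C_j\|u\|^3$ for $j\le r$. This holds because $\log\phi_p(0)=0$ and $\nabla_u\log\phi_p(0)=0$ for every $p$. It gives $\partial_p^j[nR_p(v/\sqrt n)]=O(\|v\|^3/\sqrt n)$, which is the needed comparison.

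Two smaller corrections. First, on the annulus $n^{-1/2}\log n\le|\theta|\le\eta$ your bound $|\phi_p|\le1-c$ is false. There you need $|\phi_p(\theta)|\le e^{-c_0|\theta|^2}$, which gives $e^{-c_0(\log n)^2}$: superpolynomially rather than exponentially small, but still enough against the $n^j$ factors. Second, showing the $z$-tail $|z|>C\log n$ is negligible in $C^r$, not just in sup norm, needs a derivative bound such as $|\partial_p^j\widetilde\Pp(S_{n,p}=z)|\le C_jn^j\widetilde\Pp(S_{n,p}=z)$. This follows from the uniformly bounded log-derivatives of the single-trial tilted masses.
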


The proof is deferred to \Cref{app:corner-proof}. We first verify the lemma's hypotheses and compute the edge and wedge constants.

\subsection{Single-edge rate and prefactor}\label[appendix]{app:single-edge-asymptotics}

Let $X_0\sim\Bern(p)$ and $Y_0\sim\Bern(q)$ be independent, and put $Z=X_0-Y_0$. Its moment generating function is
\begin{equation*}
 M_{1,p}(\lambda)
 =(1-p+pe^\lambda)(1-q+qe^{-\lambda}).
\end{equation*}
The saddlepoint solving $\partial_\lambda\log M_{1,p}(\lambda)=0$ is
\begin{equation}
 e^{\lambda_{\mathrm e}(p)}
 =\left[\frac{q(1-p)}{p(1-q)}\right]^{1/2},
 \label{eq:lambda-edge}
\end{equation}
and the associated rate $-\log M_{1,p}(\lambda_{\mathrm e}(p))$ is the function $I_1(p)$ in \eqref{eq:I1}. Its KL representation is
\begin{equation*}
 I_1(p)
 =\inf_{x\ge y}\{\KL(x\|p)+\KL(y\|q)\}
 =\inf_{0\le z\le1}\{\KL(z\|p)+\KL(z\|q)\}.
\end{equation*}
To see the second equality, let $F(x,y)=\KL(x\|p)+\KL(y\|q)$. If a minimizer had $x>y$, then for all sufficiently small $s>0$ the point $(1-s)(x,y)+s(p,q)$ would remain feasible, while strict convexity and $F(p,q)=0$ would give a smaller value. Thus the constraint is active at the minimizer, so $x=y=z$.

\begin{proof}[Proof of \Cref{lem:one-edge-rate}]
Formula \eqref{eq:I1} is invariant under $p\mapsto T(p)$.

To prove uniqueness of the minimum, write $s=p+\delta/2$. The affinity inside the logarithm in \eqref{eq:I1} is
\[
 B(s)=\sqrt{s^2-\delta^2/4}
 +\sqrt{(1-s)^2-\delta^2/4}.
\]
Each square-root term is strictly concave on the interior and $B(s)=B(1-s)$. Thus $B$ has its unique maximum at $s=1/2$, equivalently $p=\pc$. Direct substitution and differentiation give \eqref{eq:J-and-curvature}.

Choose $r_0\in(0,\pc)$ and set $U=[\pc-r_0,\pc+r_0]$, so that $U\subset(0,1-\delta)$ and $T(U)=U$. On $U$, the support of $Z$ is the fixed set $\{-1,0,1\}$, all three masses are positive, and the support differences generate $\Zz$. Formula \eqref{eq:lambda-edge} defines a smooth positive saddlepoint because $q>p$. With
\[
 w_1(0)=\frac12,\qquad w_1(z)=1\quad(z\ge1),
\]
the weighted corner probability in \Cref{lem:corner} is exactly $a_n(p)$. Applying the lemma with $d=1$ and $r=2$ yields \eqref{eq:a-uniform-leading} for a smooth positive prefactor $A_1(p)$. Moreover,
\[
 A_1(p)=\lim_{n\to\infty}n^{1/2}e^{nI_1(p)}a_n(p)
\]
uniformly on the neighborhood. The invariance of $a_n$ from \Cref{prop:reflection}, together with the corresponding invariance of $I_1$, therefore gives $A_1(p)=A_1(T(p))$.

At the center, \eqref{eq:lambda-edge} becomes $e^{\lambda_{\mathrm e}}=h/\ell$. Under the tilted law, $X_0$ and $Y_0$ are independent $\Bern(1/2)$ variables, so the tilted difference has variance $1/2$ and full lattice span. Put $\rho_{\mathrm e}=e^{-\lambda_{\mathrm e}}=\ell/h=(1-\delta)/(1+\delta)$. The weighted geometric factor associated with $w_1$ is
\[
 W_1=\frac12+\sum_{z=1}^\infty\rho_{\mathrm e}^z
 =\frac1{2\delta}.
\]
Since $(2\pi)^{1/2}\sqrt{1/2}=\sqrt\pi$, the leading formula gives \eqref{eq:A1}. Exact reflection gives \eqref{eq:a-center-first}, and \eqref{eq:corner-second-derivative} gives \eqref{eq:a-center-second}.
\end{proof}

\subsection{Wedge rate and prefactor}\label[appendix]{app:wedge-asymptotics}

The following reduction identifies the constrained empirical configuration for a high-centered wedge.

\begin{lemma}[Variational reduction for a high-centered wedge]\label{lem:wedge-variational}
For $0<p<q<1$,
\begin{align*}
 &\inf\left\{
 \KL(x_1\|p)+\KL(x_2\|p)+\KL(y\|q):
 (x_1,x_2,y)\in[0,1]^3,\ x_1\ge y,\ x_2\ge y
 \right\}
 \\
 &\hspace{25mm}
 =\inf_{0\le z\le1}\{2\KL(z\|p)+\KL(z\|q)\}.
\end{align*}
The minimizer is unique and has $x_1=x_2=y=z^*(p)\in(p,q)$.
\end{lemma}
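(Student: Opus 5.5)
The argument mirrors the single-edge reduction in \Cref{app:single-edge-asymptotics}. It uses strict convexity of $G(x_1,x_2,y)=\KL(x_1\|p)+\KL(x_2\|p)+\KL(y\|q)$ on $[0,1]^3$ together with $G(p,p,q)=0$, where $(p,p,q)$ violates the constraints because $p<q$. The feasible set $F=\{x_1\ge y,\ x_2\ge y\}\cap[0,1]^3$ is compact and convex, and $G$ is continuous with the endpoint convention. Hence a minimizer exists, and by strict convexity it is unique.

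The first step shows that both constraints are active at the minimizer. Suppose the minimizer $(x_1,x_2,y)$ had, say, $x_1>y$. I would move $x_1$ alone toward $p$. If $x_1>p$, replace $x_1$ by $\max\{p,y\}$; this keeps $x_1\ge y$ and strictly lowers $\KL(x_1\|p)$, because the function is strictly decreasing on $[0,p]$ and increasing on $[p,1]$ in $x$. The remaining case, $x_1\le p$ with $x_1>y$, needs a different perturbation. Here I would try raising $y$ to $\min\{x_1,x_2\}$. Since $y<x_1\le p<q$, this strictly lowers $\KL(y\|q)$ and stays feasible. The same argument applies to $x_2$, so at the minimizer $x_1=y$ and $x_2=y$.

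It follows that the minimizer has the form $(z,z,z)$. The infimum therefore equals $\inf_{z}\phi(z)$ with $\phi(z)=2\KL(z\|p)+\KL(z\|q)$; conversely, every diagonal point is feasible, so the two infima agree. For the location, note that $\phi$ is strictly convex and
\[
 \phi'(z)=2\log\frac{z(1-p)}{p(1-z)}+\log\frac{z(1-q)}{q(1-z)}.
\]
This derivative is strictly negative at $z=p$ (its value there is $\log\frac{p(1-q)}{q(1-p)}<0$) and strictly positive at $z=q$. So the unique root $z^*$ lies in $(p,q)$. Solving $\phi'(z)=0$ explicitly gives $z^*/(1-z^*)=(p^2q)^{1/3}/((1-p)^2(1-q))^{1/3}$, which yields the closed form \eqref{eq:IH} as a consistency check.

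The main obstacle is making the activity argument airtight at boundary points, where $\KL$ may be infinite or its derivative singular. Since $0<p<q<1$, the optimal value is finite: the diagonal point $z=p$ gives $\KL(p\|q)<\infty$. So the minimizer has finite objective, and each perturbation above is a strict decrease of a finite quantity by monotonicity alone, without differentiability. Hence the endpoints cause no difficulty.
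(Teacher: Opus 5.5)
Your overall route is the paper's: monotone perturbations force both constraints to be active, and then strict convexity of $2\KL(z\|p)+\KL(z\|q)$, with the derivative signs at $z=p$ and $z=q$, places the unique minimizer in $(p,q)$. The existence and uniqueness argument, the diagonal reduction, and the derivative computation are all fine. However, the activity step has a hole.

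In your second case, $y<x_1\le p$, the move ``raise $y$ to $\min\{x_1,x_2\}$'' does nothing when the other constraint is already active. If $x_2=y$, then $\min\{x_1,x_2\}=y$. For example, a point $(x_1,x_2,y)=(p,y_0,y_0)$ with $y_0<p$ is ruled out by neither of your perturbations. Saying ``the same argument applies to $x_2$'' does not help, because at such a point $x_2\ge y$ is the active constraint, not the slack one.

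The repair is one line. In that subcase $x_2=y<x_1\le p<q$, so raising $x_2$ and $y$ together to $x_1$ keeps both constraints. It also strictly lowers both $\KL(x_2\|p)$ and $\KL(y\|q)$, since both functions are strictly decreasing on $[0,p]$.

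Alternatively, order the argument as the paper does, which avoids any case split on which constraint is slack:
\begin{enumerate}[label=(\roman*)]
\item For fixed $y$, the optimal low values are $x_i=\max\{p,y\}$.
\item A point with $y<p$ is strictly improved by replacing $y$ with $p$ and each $x_i$ with $\max\{x_i,p\}$, so every minimizer has $y\ge p$.
\item Then $x_i=\max\{p,y\}=y$ is forced.
\end{enumerate}

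Finally, your concern about infinite values at the boundary does not arise. For $a\in(0,1)$, $\KL(\cdot\|a)$ is finite and continuous on all of $[0,1]$.
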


\begin{proof}
For fixed $y$, each low term is minimized over $x_i\ge y$ by $x_i=\max\{p,y\}$. If $y<p$, increasing $y$ toward $p$ while keeping $x_1=x_2=p$ strictly decreases $\KL(y\|q)$, so no minimizer has $y<p$. Consequently $y\ge p$ and both low constraints are active: $x_1=x_2=y$. The problem reduces to the strictly convex function
\[
 f(z)=2\KL(z\|p)+\KL(z\|q),\qquad z\in[p,1].
\]
Its right derivative at $p$ is $\partial_z\KL(p\|q)<0$, whereas its derivative at $q$ is $2\partial_z\KL(q\|p)>0$. Hence its unique minimizer lies in $(p,q)$, proving the claim.
\end{proof}

For one Bernoulli trial in a high-centered wedge, put
\[
 Z=(X_{01}-Y_0,\,X_{02}-Y_0)\in\Zz^2,
\]
where $X_{01},X_{02}\sim\Bern(p)$ and $Y_0\sim\Bern(q)$ are independent. Its moment generating function is
\begin{equation}
 M_{2,p}(\lambda_1,\lambda_2)
 =(1-p+pe^{\lambda_1})(1-p+pe^{\lambda_2})
 (1-q+qe^{-\lambda_1-\lambda_2}).
 \label{eq:M2}
\end{equation}
The symmetric saddlepoint has $\lambda_1=\lambda_2=\lambda_{\mathrm w}(p)$ with
\begin{equation}
 e^{\lambda_{\mathrm w}(p)}
 =\left[\frac{q(1-p)}{p(1-q)}\right]^{1/3}.
 \label{eq:lambda-wedge}
\end{equation}
By \Cref{lem:wedge-variational}, the corresponding rate is $I_H(p)$ in \eqref{eq:IH}. The unique minimizer satisfies
\[
 \left(\frac{z^*}{1-z^*}\right)^3
 =\frac{q}{1-q}\left(\frac p{1-p}\right)^2.
\]
The low-centered orientation has rate $I_H(T(p))$.

Choose a compact neighborhood $U\subset(0,1-\delta)$ of $\pc$. The support of $Z$ is the fixed set
\[
 \{(-1,-1),(0,-1),(-1,0),(0,0),(1,0),(0,1),(1,1)\}.
\]
Every support point has positive mass on $U$, and the support differences include the two coordinate vectors, so they generate $\Zz^2$. Formula \eqref{eq:lambda-wedge} defines a smooth saddlepoint $(\lambda_{\mathrm w}(p),\lambda_{\mathrm w}(p))\in(0,\infty)^2$. The corner weight is
\[
 w_2(a,b)=
 \begin{cases}
  1,&a,b\ge1,\\
  1/2,&\text{exactly one of $a,b$ is zero},\\
  1/3,&a=b=0.
 \end{cases}
\]
Thus the high-centered wedge probability is the weighted corner probability in \Cref{lem:corner}.

\begin{proof}[Proof of \Cref{lem:wedge}]
At the center, $\ell=\taud^3/(1+\taud^3)$ and $h=1/(1+\taud^3)$. Substitution in \eqref{eq:IH} gives \eqref{eq:K}, using $1+\taud^3=(1+\taud)(1-\taud+\taud^2)$. Differentiating \eqref{eq:IH} and substituting $p=\ell,q=h$ gives \eqref{eq:kappa}; positivity follows from $0<\taud<1$.

At the center, $e^{\lambda_{\mathrm w}}=\taud^{-2}$. Under the tilted law associated with \eqref{eq:M2}, the three Bernoulli variables become independent with the common success probability
\[
 \pi_\delta=\frac{\taud}{1+\taud}.
\]
Writing $v_\delta=\pi_\delta(1-\pi_\delta)=\taud/(1+\taud)^2$, the covariance of $(X_{01}-Y_0,X_{02}-Y_0)$ is
\begin{equation*}
 \Sigma_2=v_\delta
 \begin{pmatrix}2&1\\1&2\end{pmatrix},
 \qquad
 \sqrt{\det\Sigma_2}=\sqrt3\,v_\delta.
\end{equation*}

The auxiliary uniforms give the stated boundary weights. In particular, the weight at $(0,0)$ is
\[
 \Pp(U_1>V,U_2>V)=\int_0^1(1-v)^2\,dv=\frac13.
\]
Let $\rho_{\mathrm w}=e^{-\lambda_{\mathrm w}}=\taud^2$. The weighted geometric sum is
\begin{equation*}
 W_2
 =\sum_{a,b\ge1}\rho_{\mathrm w}^{a+b}
 +\frac12\sum_{a\ge1}\rho_{\mathrm w}^a
 +\frac12\sum_{b\ge1}\rho_{\mathrm w}^b+\frac13
 =\frac{1+\taud^2+\taud^4}{3(1-\taud^2)^2}.
\end{equation*}
Applying \Cref{lem:corner} with $d=2$ and $r=2$ gives the uniform expansion \eqref{eq:H-uniform} and
\[
 A_{2,\delta}=\frac{W_2}{2\pi\sqrt3\,v_\delta}
 =\frac{1+\taud^2+\taud^4}
 {6\pi\sqrt3\,\taud(1-\taud)^2}.
\]
Because $I_H'(\pc)=\kappa_\delta\ne0$ and $W_2>0$, \eqref{eq:corner-first-derivative} gives \eqref{eq:H-derivative} in relative form. The reflection identity \eqref{eq:HL-reflection} gives \eqref{eq:L-center}. Finally, two differentiations of \eqref{eq:H-uniform} show that $H_n''(p)=O(ne^{-nI_H(p)})$ on $U$. For $|p-\pc|\le n^{-1}$, smoothness gives $I_H(p)=\Kn+O(n^{-1})$, proving \eqref{eq:wedge-second-bound}; reflection handles $L_n$.
\end{proof}

\subsection{Proof of the parametric weighted lattice-corner expansion}\label[appendix]{app:corner-proof}

This subsection proves \Cref{lem:corner} by a parameter-uniform local central limit calculation under exponential tilting, followed by a geometrically weighted corner sum.

\begin{proof}[Proof of \Cref{lem:corner}]
\medskip \noindent\emph{Step 1: exponential tilt and change of measure.} Define the exponentially tilted law
\begin{equation}
 \widetilde\Pp(Z_p=z)
 =\frac{e^{\langle\lambda(p),z\rangle}}
 {M_p(\lambda(p))}\Pp(Z_p=z).
 \label{eq:tilted-law}
\end{equation}
By construction, its mean is zero and its covariance is $\Sigma(p)$. The support hypothesis and positivity imply that $\Sigma(p)$ is positive definite. Compactness of $U$ therefore gives constants $0<c<C<\infty$ such that
\[
 cI_d\preceq\Sigma(p)\preceq CI_d
 \qquad(p\in U).
\]
The change of measure is exact:
\begin{equation}
 \Pp(S_{n,p}=z)
 =e^{-nI(p)}e^{-\langle\lambda(p),z\rangle}
 \widetilde\Pp(S_{n,p}=z).
 \label{eq:exact-change-measure}
\end{equation}
Under the tilted law the rare corner is centered at the mean, while $e^{-nI(p)}$ records its exponential cost under the original law.

\medskip \noindent\emph{Step 2: uniform local central limit estimate.} We first establish a local estimate, uniformly in $p$ and in $\|z\|_\infty\le L\log n$ for every fixed $L$. Let
\[
 \phi_p(u)=\widetilde\Ee
 e^{i\langle u,Z_p\rangle},
 \qquad u\in[-\pi,\pi]^d.
\]
Uniformly in $p\in U$, $\phi_p(u)\to1$ as $u\to0$. Choose $\eta>0$ so that $\phi_p(u)$ lies in the disk $|\zeta-1|<1/2$ whenever $p\in U$ and $\|u\|\le\eta$. The principal branch of $\log\phi_p(u)$ is then $C^{r+2}$ in $p$ and smooth in $u$. Since $\log\phi_p(0)=0$ and $\nabla_u\log\phi_p(0)=i\widetilde\Ee Z_p=0$ for every $p$, Taylor expansion at the origin, also after up to $r$ parameter derivatives, gives
\begin{equation}
 \log\phi_p(u)
 =-\frac12u^{\mathsf T}\Sigma(p)u+R_p(u),
 \label{eq:char-expansion}
\end{equation}
where, for $0\le j\le r$,
\begin{equation}
 \sup_{p\in U,\,\|u\|\le\eta}
 |\partial_p^jR_p(u)|\le C_j\|u\|^3.
 \label{eq:R-bound}
\end{equation}
All constants are uniform because the support is fixed and the masses are smooth on a compact parameter set. The tilted masses in \eqref{eq:tilted-law} are continuous and positive, hence have a common positive lower bound on $U$. For fixed $p$, equality $|\phi_p(u)|=1$ means that $e^{i\langle u,x-y\rangle}=1$ for every pair of support points $x,y$. Since their differences generate $\Zz^d$, this holds only at $u=0\pmod{2\pi\Zz^d}$. Continuity on the compact set
\[
 U\times\{u\in[-\pi,\pi]^d:\|u\|\ge\eta\}
\]
therefore gives a common bound $\rho<1$. The covariance bounds near the origin give, after decreasing $\eta$ if necessary, a constant $c_0>0$ such that
\begin{align*}
 |\phi_p(u)|&\le e^{-c_0\|u\|^2},
 &&\|u\|\le\eta,\\
 |\phi_p(u)|&\le\rho,
 &&u\in[-\pi,\pi]^d,\ \|u\|\ge\eta.
\end{align*}
uniformly in $p$.

Fourier inversion yields
\begin{equation*}
 \widetilde\Pp(S_{n,p}=z)
 =(2\pi)^{-d}\int_{[-\pi,\pi]^d}
 \phi_p(u)^ne^{-i\langle u,z\rangle}\,du.
\end{equation*}
The parameter derivatives may be taken inside this integral. Indeed, repeated differentiation of the finite-support characteristic function and the product $\phi_p(u)^n$ gives, for $0\le j\le r$ and all sufficiently large $n$,
\begin{align*}
 |\partial_p^j\{\phi_p(u)^n\}|
 &\le C_jn^j e^{-c_0(n-j)\|u\|^2},
 &&\|u\|\le\eta,\\
 |\partial_p^j\{\phi_p(u)^n\}|
 &\le C_jn^j\rho^{\,n-j},
 &&\eta\le\|u\|\le\pi\sqrt d.
\end{align*}
For example, every term after $j$ differentiations contains at least $n-j$ undifferentiated factors $\phi_p(u)$, while all derivatives of $\phi_p$ are uniformly bounded. Split the integral into $\|u\|\le n^{-2/5}$ and its complement. The two bounds above show that the complementary part, and each of its first $r$ parameter derivatives, is $O(e^{-c_1n^{1/5}})$. On the small ball put $u=v/\sqrt n$. From \eqref{eq:char-expansion}--\eqref{eq:R-bound}, uniformly for $\|v\|\le n^{1/10}$,
\begin{equation*}
 \phi_p(v/\sqrt n)^n
 =e^{-\frac12v^{\mathsf T}\Sigma(p)v}
 \left[1+O_{C^r(U)}\left(
 n^{-1/2}(1+\|v\|)^{3r+3}e^{C\|v\|^3/\sqrt n}
 \right)\right].
\end{equation*}
For $j=0$, use $|e^{nR}-1|\le |nR|e^{|nR|}$. For $1\le j\le r$, repeated use of the chain rule expresses each parameter derivative of $e^{nR_p(v/\sqrt n)}$ as that exponential times a finite sum of products of factors $n\,\partial_p^aR_p(v/\sqrt n)$. Each term in the difference therefore contains either $e^{nR}-1$ or at least one such factor, both bounded by $O(n^{-1/2}\|v\|^3)$ times a fixed polynomial in $\|v\|$. Parameter derivatives of the Gaussian factor contribute only fixed-degree polynomials in $v$, which are absorbed by a Gaussian envelope. Thus, for all sufficiently large $n$, $0\le j\le r$, and $\|v\|\le n^{1/10}$,
\[
 \sup_{p\in U}
 \left|\partial_p^j\left\{
 \phi_p(v/\sqrt n)^n
 -e^{-v^{\mathsf T}\Sigma(p)v/2}
 \right\}\right|
 \le Cn^{-1/2}(1+\|v\|)^{3r+3}e^{-c\|v\|^2}.
\]
The factor $e^{-i\langle v,z\rangle/\sqrt n}$ causes no parameter-derivative loss because $z$ does not depend on $p$ and $\|z\|\le L\log n$. The displayed bound is integrable after multiplication by every fixed polynomial, so dominated integration applies. Extending the $v$-integral to $\Rr^d$ gives
\begin{equation*}
 \widetilde\Pp(S_{n,p}=z)
 =n^{-d/2}\left\{
 \frac{\exp\left[-z^{\mathsf T}\Sigma(p)^{-1}z/(2n)\right]}
 {(2\pi)^{d/2}\sqrt{\det\Sigma(p)}}
 +O_{C^r(U)}\left(n^{-1/2}(\log n)^{3r+5}\right)\right\}.
\end{equation*}
For $\|z\|_\infty\le L\log n$, the exponential in the first line equals $1+O_{C^r(U)}((\log n)^2/n)$. Thus
\begin{equation}
 \widetilde\Pp(S_{n,p}=z)
 =\frac{n^{-d/2}}
 {(2\pi)^{d/2}\sqrt{\det\Sigma(p)}}
 \left[1+O_{C^r(U)}(\varepsilon_n)\right].
 \label{eq:flat-local-clt}
\end{equation}
Thus each fixed lattice point within logarithmic distance of the tilted mean has mass of order $n^{-d/2}$.

\medskip \noindent\emph{Step 3: geometrically weighted corner sum.} It remains to sum over the corner. Since every coordinate of $\lambda(p)$ is bounded below by a positive constant on $U$, there is $c_2>0$ such that $e^{-\langle\lambda(p),z\rangle}\le e^{-c_2\|z\|_1}$. Choose $L$ sufficiently large. The contribution in \eqref{eq:exact-change-measure} from $\|z\|_\infty>L\log n$ is $O(n^{-A})e^{-nI(p)}$ for an arbitrarily prescribed $A$, uniformly in $p$. More explicitly, for every $A>0$ and $0\le j\le r$, $L$ may be chosen so that
\begin{equation}
 \sup_{p\in U}\left|
 \partial_p^j\left[
 e^{nI(p)}
 \sum_{\substack{z\in\Zz_+^d\\\|z\|_\infty>L\log n}}
 w(z)\Pp(S_{n,p}=z)
 \right]\right|=O(n^{-A}).
 \label{eq:corner-tail-Cr}
\end{equation}
To see this, write $\widetilde p_p(x)$ for the single-trial tilted mass. Its first $r$ log-derivatives are uniformly bounded on the fixed support. For every sample sequence $(x_1,\ldots,x_n)$ and $0\le j\le r$,
\[
 \left|\partial_p^j
 \prod_{\nu=1}^n\widetilde p_p(x_\nu)\right|
 \le C_jn^j\prod_{\nu=1}^n\widetilde p_p(x_\nu).
\]
After summing over sequences with total $z$, this gives
\[
 |\partial_p^j\widetilde\Pp(S_{n,p}=z)|
 \le C_jn^j\widetilde\Pp(S_{n,p}=z).
\]
Derivatives of $e^{-\langle\lambda(p),z\rangle}$ contribute only fixed powers of $\|z\|$. The product rule therefore bounds each derivative of the normalized tilted summand by
\[
 C_j(n+\|z\|)^{r}e^{-c_2\|z\|_1}
 \widetilde\Pp(S_{n,p}=z)
 \le C_j(n+\|z\|)^{r}e^{-c_2\|z\|_1}.
\]
Summing this envelope over $\|z\|_\infty>L\log n$ proves \eqref{eq:corner-tail-Cr} once $L$ is large enough. Choose $A$ larger than $d/2+1$. On the truncated corner, insert \eqref{eq:flat-local-clt}. Termwise parameter differentiation of the absolutely convergent weighted geometric sum is justified by the same uniform exponential envelope. The truncated sum converges in $C^r(U)$ to $W(p)$ with a tail smaller than any chosen power of $n^{-1}$. The geometric weights therefore change the prefactor $W(p)$ but not the rate $I(p)$. This proves \eqref{eq:corner-expansion}.

\medskip \noindent\emph{Step 4: parameter derivatives.} Write
\[
 b(p)=\frac{W(p)}{(2\pi)^{d/2}\sqrt{\det\Sigma(p)}},
 \qquad
 \mathcal T_n(p)=n^{-d/2}e^{-nI(p)}\{b(p)+e_n(p)\},
 \qquad
 \|e_n\|_{C^r(U)}=O(\varepsilon_n).
\]
If $I'(p_0)\ne0$, one differentiation gives
\[
 \mathcal T_n'(p_0)
 =n^{1-d/2}e^{-nI(p_0)}
 \{-I'(p_0)b(p_0)+O(\varepsilon_n+n^{-1})\}.
\]
This proves \eqref{eq:corner-first-derivative}; when $W(p_0)\ne0$, the error can also be written relative to the leading term. If $I'(p_0)=0$, two differentiations give
\[
 \mathcal T_n''(p_0)
 =n^{1-d/2}e^{-nI(p_0)}
 \{-I''(p_0)b(p_0)+O(\varepsilon_n+n^{-1})\}.
\]
When $I''(p_0)>0$ and $W(p_0)>0$, this proves \eqref{eq:corner-second-derivative}.
\end{proof}

\section{Rate separation and localization}\label[appendix]{app:graph-separation}

This appendix provides the graph-rate classification in \Cref{lem:graph-gap} and the uniform estimates used to localize the maximizer and prove uniqueness in \Cref{thm:global-top-t}.

\subsection{Graph-rate separation}\label[appendix]{app:graph-rate-details}

Throughout this appendix, $q=p+\delta$. Recall the Bernoulli relative entropy $\KL$ from \eqref{eq:KL}. For a nonempty edge set $A\subset E(K_{k-t,t})$, let $G_A$ be the bipartite graph formed by its incident vertices and edges. Write $V_L(G_A)$ and $V_H(G_A)$ for its low and high vertex classes, respectively, and $V(G_A)=V_L(G_A)\cup V_H(G_A)$. Define
\begin{equation*}
 \mathcal I_p(G_A)
 =\min_{(x,y)\in\mathcal C(G_A)}
 \left\{
 \sum_{i\in V_L(G_A)}\KL(x_i\|p)
 +\sum_{j\in V_H(G_A)}\KL(y_j\|p+\delta)
 \right\},
\end{equation*}
where
\begin{equation*}
 \mathcal C(G_A)
 =\left\{(x,y)\in[0,1]^{V_L(G_A)}\times[0,1]^{V_H(G_A)}:
 x_i\ge y_j\ \text{for every }(i,j)\in A\right\}.
\end{equation*}
The endpoint convention in \eqref{eq:KL} covers boundary values. On every compact $U\subset(0,1-\delta)$, the objective is jointly continuous and the feasible set is fixed and compact. Hence $p\mapsto\mathcal I_p(G_A)$ is continuous on $U$; strict convexity gives a unique minimizer for every fixed $p\in U$.

\begin{proof}[Proof of \Cref{lem:graph-gap}]
\medskip \noindent\emph{Graph-rate classification.} At the center, write $\ell:=\pc$ and $h:=\pc+\delta=1-\pc$. Parts (a) and (b) are the variational formulas already derived for $I_1(\pc)$ and $I_H(\pc)$, together with reflection for a low-centered wedge. A matching has independent connected components, so graph rates add; this proves (c), including the strict comparison by \Cref{lem:gap}.

For (d), every connected bipartite graph with at least three edges contains a connected two-edge wedge. Since a simple bipartite graph on the three vertices of that wedge has at most those two edges, there exists an edge of the larger graph leaving the wedge vertex set; in particular, at least one new vertex is attached to an existing wedge vertex. The wedge minimizer has a unique interior common level. An added incident vertex cannot remain at its zero-cost natural mean while satisfying the new comparison, so the larger connected graph has a strictly higher rate. The details follow.

Consider first a wedge centered at a high vertex. Its unique rate minimizer assigns the common value
\[
 z_H=\frac{\taud}{1+\taud}
\]
to its two low vertices and its high vertex. The natural means satisfy
\begin{equation*}
 \ell<z_H<h.
\end{equation*}
Indeed, $z_H-\ell=\taud(1-\taud^2)/[(1+\taud)(1+\taud^3)]>0$ and $h-z_H=(1-\taud^4)/[(1+\taud)(1+\taud^3)]>0$. If the leaving edge introduces a low vertex adjacent to the existing high vertex, assigning that new vertex its natural mean $\ell$ violates $\ell\ge z_H$. If it introduces a high vertex adjacent to an existing low vertex, assigning the new vertex its natural mean $h$ violates $z_H\ge h$. Thus the wedge minimizer, augmented by natural means for all new vertices, is not feasible for the larger graph.

For a wedge centered at a low vertex, the common minimizing value is $z_L=1/(1+\taud)$, and the same calculation gives $\ell<z_L<h$. The identical argument applies. Let $G_{\mathrm w}$ denote the chosen wedge. Restricting any feasible point for the larger graph $G$ to $G_{\mathrm w}$, and using nonnegativity of the remaining relative-entropy terms, gives
\[
 \mathcal I_{\pc}(G)\ge \mathcal I_{\pc}(G_{\mathrm w})=\Kn.
\]
Equality would force the restriction to $G_{\mathrm w}$ to equal its unique minimizer and every new vertex to equal its natural mean. The preceding infeasibility rules this out. Hence $\mathcal I_{\pc}(G)>\Kn$.

It remains to pass from connected components to every edge set in $\mathcal R_n$. Writing $\operatorname{Comp}(G_A)$ for the set of connected components of $G_A$, we have
\[
 \mathcal I_{\pc}(G_A)
 =\sum_{C\in\operatorname{Comp}(G_A)}\mathcal I_{\pc}(C).
\]
If an edge set with at least three edges has a connected component with at least three edges, part (d), together with additivity over disjoint components, gives total rate strictly larger than $\Kn$. If all components have at most two edges and one component is a wedge, the remaining components contribute at least $\Jn$, so the total rate is at least $\Kn+\Jn>\Kn$. If every component is a single edge, the set is a matching of at least three edges and has rate at least $3\Jn>\Kn$. Finally, two or more disjoint wedges have rate at least $2\Kn>\Kn$. These cases exhaust all edge sets containing at least three edges.

\medskip \noindent\emph{Uniform probability and derivative bounds.} There are only finitely many edge sets in $K_{k-t,t}$. The continuity of $\mathcal I_p(G_A)$ proved above implies that all strict center gaps persist, after shrinking $U$, with a common margin $2\eta>0$. For any fixed graph event, ignore the auxiliary uniforms and retain the weak count inequalities $X_i/n\ge Y_j/n$. There are at most $(n+1)^{|V(G_A)|}$ vectors of empirical proportions, and the method of types gives
\[
 \Pp\left(\bigcap_{e\in A}E_e\right)
 \le(n+1)^{|V(G_A)|}e^{-n\mathcal I_p(G_A)}.
\]
The auxiliary tie variables only multiply boundary count configurations by numbers in $[0,1]$; they cannot change the exponential upper rate. This proves \eqref{eq:remainder-prob-bound}.

On $U$, all Bernoulli parameters are bounded away from zero and one. For a fixed $G_A$, abbreviate $V_L=V_L(G_A)$ and $V_H=V_H(G_A)$. For count vectors $r=(r_i)_{i\in V_L}$ and $s=(s_j)_{j\in V_H}$, let $\mathcal L_p(r,s)$ denote their joint binomial likelihood. Its score is
\begin{equation*}
 \mathcal S_p(r,s)
 :=\partial_p\log \mathcal L_p(r,s)
 =\sum_{i\in V_L}\frac{r_i-np}{p(1-p)}
 +\sum_{j\in V_H}\frac{s_j-nq}{q(1-q)}.
\end{equation*}
Moreover,
\[
 \partial_p\mathcal S_p(r,s)
 =-\sum_{i\in V_L}\left\{\frac{r_i}{p^2}
       +\frac{n-r_i}{(1-p)^2}\right\}
  -\sum_{j\in V_H}\left\{\frac{s_j}{q^2}
       +\frac{n-s_j}{(1-q)^2}\right\}.
\]
Since the graph is fixed,
\[
 |\mathcal S_p(r,s)|\le Cn|V(G_A)|,\qquad
 |\partial_p\mathcal S_p(r,s)|\le Cn|V(G_A)|
\]
uniformly in $p\in U$ and in the counts. Consequently
\[
 |\partial_p\mathcal L_p(r,s)|\le Cn\mathcal L_p(r,s),\qquad
 |\partial_p^2\mathcal L_p(r,s)|
 =\mathcal L_p(r,s)|\mathcal S_p(r,s)^2+\partial_p\mathcal S_p(r,s)|
 \le Cn^2\mathcal L_p(r,s).
\]
The tie weight attached to a count vector lies in $[0,1]$ and is independent of $p$. Differentiating the finite count sum and then applying the preceding type bound therefore adds at most the factors $Cn$ and $Cn^2$. These polynomial factors are absorbed by reducing $2\eta$ to $\eta$, proving \eqref{eq:R-C2}.

\medskip \noindent\emph{Local edge-dominance gap.} For the final local-gap assertion, every occurring non-single-edge graph has center rate at least $\Kn>\Jn=I_1(\pc)$. There are finitely many such graphs, and their rates and $I_1$ are continuous. Hence, after shrinking $U$, a common $\gamma>0$ satisfies
\[
 \mathcal I_p(G)\ge I_1(p)+\gamma
\]
for every non-single-edge graph occurring in \eqref{eq:exact-graph-decomp}. Summing the likelihood-score bounds above over these finitely many edge sets gives the stated derivative bound.
\end{proof}

\subsection{Uniform localization and curvature estimates}\label[appendix]{app:localization-details}

We give the uniform estimates used in the proof of \Cref{thm:global-top-t}.

\medskip \noindent\emph{Step 1: global localization.} The rate $I_1$ has its unique minimum $\Jn$ at $\pc$. Hence, for every fixed $\varepsilon>0$, there is $c_\varepsilon>0$ such that
\begin{equation*}
 I_1(p)\ge\Jn+c_\varepsilon
 \qquad\text{when }|p-\pc|\ge\varepsilon.
\end{equation*}
The union bound and method of types give
\begin{equation}
 Q_n(p)
 \le t(k-t)\Pp(X\ge Y)
 \le t(k-t)(n+1)^2e^{-nI_1(p)}.
 \label{eq:Q-away-upper}
\end{equation}
This bound is valid on the full closed interval: at $p=0$ or $p=1-\delta$ it is interpreted with the extended KL convention in \eqref{eq:KL}, or obtained by continuity from the interior.

At the center, the lower Bonferroni bound together with \Cref{lem:one-edge-rate,lem:wedge,lem:gap} shows that all pair intersections are $o(a_n(\pc))$. Therefore
\begin{equation}
 Q_n(\pc)
 \ge\frac{t(k-t)}{2}A_{1,\delta}n^{-1/2}e^{-n\Jn}
 \label{eq:Q-center-lower}
\end{equation}
for all sufficiently large $n$. Comparing \eqref{eq:Q-away-upper} and \eqref{eq:Q-center-lower} puts every global maximizer of $Q_n$ inside $(\pc-\varepsilon,\pc+\varepsilon)$ eventually. Taking $\varepsilon$ small enough, the same argument puts all global maximizers in the interval $U$ fixed after \Cref{lem:graph-gap}.

\medskip \noindent\emph{Step 2: derivative signs outside an $n^{-1}$ neighborhood.} Write the uniform one-edge expansion as
\begin{equation}
 a_n(p)
 =n^{-1/2}e^{-nI_1(p)}\{A_1(p)+\Delta_n(p)\},
 \qquad
 \|\Delta_n\|_{C^2(U)}=o(1).
 \label{eq:a-uniform}
\end{equation}
By \Cref{lem:one-edge-rate}, both $I_1$ and $A_1$ are even about $\pc$. After shrinking $U$, there are constants $c_1,c_2>0$ such that
\begin{equation*}
 \operatorname{sgn}I_1'(p)=\operatorname{sgn}(p-\pc),
 \qquad
 |I_1'(p)|\ge c_1|p-\pc|,
 \qquad
 |A_1'(p)|\le c_2|p-\pc|.
\end{equation*}
Differentiating \eqref{eq:a-uniform} gives
\begin{equation*}
 \frac{a_n'(p)}{n^{-1/2}e^{-nI_1(p)}}
 =-nI_1'(p)\{A_1(p)+\Delta_n(p)\}
   +A_1'(p)+\Delta_n'(p).
\end{equation*}
Let $A_*=\inf_{p\in U}A_1(p)>0$. For all large $n$, $A_1+\Delta_n\ge A_*/2$ on $U$. If $n^{-1}\le|p-\pc|$, then
\[
 n|I_1'(p)|\{A_1(p)+\Delta_n(p)\}
 \ge \frac{c_1A_*}{2}n|p-\pc|
 \ge \frac{c_1A_*}{2},
\]
whereas
\[
 \frac{|A_1'(p)|}
 {n|I_1'(p)|\{A_1(p)+\Delta_n(p)\}}
 \le\frac{2c_2}{nc_1A_*},
 \qquad
 \frac{\|\Delta_n'\|_\infty}
 {n|I_1'(p)|\{A_1(p)+\Delta_n(p)\}}
 \le\frac{2\|\Delta_n'\|_\infty}{c_1A_*}.
\]
Both ratios vanish uniformly. Hence, uniformly for $n^{-1}\le|p-\pc|\le\operatorname{diam}(U)$,
\begin{equation*}
 \operatorname{sgn}a_n'(p)=-\operatorname{sgn}(p-\pc)
\end{equation*}
for all sufficiently large $n$. The same estimate gives the quantitative lower bound
\begin{equation*}
 |a_n'(p)|\ge c_4 n^{-1/2}e^{-nI_1(p)}
\end{equation*}
on this region, for a constant $c_4>0$ independent of $n$.

By \Cref{lem:graph-gap}, the derivative of the sum of all non-single-edge terms is at most a polynomial factor times $e^{-n(I_1(p)+\gamma)}$ and is therefore $o(|a_n'(p)|)$ uniformly by the preceding lower bound. Consequently
\begin{equation}
 Q_n'(p)>0\quad\text{on }U\cap(-\infty,\pc-n^{-1}],
 \qquad
 Q_n'(p)<0\quad\text{on }U\cap[\pc+n^{-1},\infty)
 \label{eq:Q-prime-outer-sign}
\end{equation}
for all sufficiently large $n$.

\medskip \noindent\emph{Step 3: strict concavity in the central interval.} For $|p-\pc|\le n^{-1}$, two differentiations of \eqref{eq:a-uniform} give
\begin{align*}
 a_n''(p)
 =n^{-1/2}e^{-nI_1(p)}
 \Bigl(&[n^2I_1'(p)^2-nI_1''(p)]\{A_1(p)+\Delta_n(p)\}
 \\
 &-2nI_1'(p)\{A_1'(p)+\Delta_n'(p)\}
 +A_1''(p)+\Delta_n''(p)\Bigr).
\end{align*}
Here $I_1'(p)=O(n^{-1})$, while $I_1''(p)$ is bounded below by a positive constant. The term $-nI_1''(p)A_1(p)$ dominates, and
\begin{equation*}
 a_n''(p)\le-c_3n^{1/2}e^{-n\Jn}
 \qquad(|p-\pc|\le n^{-1})
\end{equation*}
for some $c_3>0$ and all large $n$.

More precisely, the expansion above is uniform on the central interval: $I_1(p)=\Jn+O(n^{-2})$, $I_1''(p)=I_1''(\pc)+O(n^{-1})$, and $A_1(p)=A_{1,\delta}+O(n^{-1})$. Thus
\begin{equation*}
 a_n''(p)
 =-I_1''(\pc)A_{1,\delta}n^{1/2}e^{-n\Jn}\{1+o(1)\}
\end{equation*}
uniformly for $|p-\pc|\le n^{-1}$.

By \eqref{eq:wedge-second-bound}, two derivatives of a wedge term are $O(ne^{-n\Kn})$. Two derivatives of the disjoint-pair term are $O(e^{-2n\Jn})$, and \eqref{eq:R-C2} controls the remainder. Since $n^{1/2}e^{-n(\Kn-\Jn)}\to0$, all these contributions are $o(n^{1/2}e^{-n\Jn})$. Combining these bounds with the preceding estimate proves \eqref{eq:uniform-central-curvature}, and hence
\begin{equation*}
 Q_n''(p)<0
 \qquad(|p-\pc|\le n^{-1})
\end{equation*}
for all sufficiently large $n$.

The derivative signs in \eqref{eq:Q-prime-outer-sign} and strict concavity on the central interval imply that $Q_n$ has exactly one maximizer in $U$. Step~1 makes it the unique global maximizer, proving eventual uniqueness.

\end{document}